\def\author#1{\gdef\autrun{\def\and{\unskip, }#1}\gdef\@author{#1}}
\newtheorem{theorem}{Theorem}[section]
\newtheorem{lemma}{Lemma}[section]
\newtheorem{definition}{Definition}[section]
\newtheorem{proposition}{Proposition}[section]
\newtheorem{remark}{Remark}[section]
\numberwithin{equation}{section}
\newcommand{\g}{\gamma}
\newcommand{\R}{\mathbf{R}}
\newcommand{\eps}{\varepsilon}
\newcommand{\va}{\varphi}
\begin{document}
	\baselineskip=16pt
	
	\begin{center}
		{\Large \bf The existence and stability of viscosity solutions to perturbed contact Hamilton-Jacobi equations}\\
		\vspace{0.3cm}
		{\sc  Huan Wu$^1$  and Shiqing Zhang$^{2,*}$ }\\[0.3cm]
		
		{\small
			$^1$ School of Mathematics, Sichuan University, Chengdu 610065, China;\\ wuhuan@stu.scu.edu.cn\\
			$^2$ School of Mathematics, Sichuan University, Chengdu 610065, China;\\ zhangshiqing@scu.edu.cn}
	\end{center}
	\footnote[0]{\hspace*{-0.7cm} $^*$Corresponding author.}
	\vspace{-0.5 cm}

	
	\begin{abstract}
		We consider a contact Hamiltonian $H(x,p,u)$  with certain dependence on the contact variable $u$. If $u_{-}$ is a viscosity solution of the contact Hamilton-Jacobi equation
		\[H(x,D_{x}u(x),u(x))=0,\quad x\in M,\]
		and $u_{-}$ is locally Lyapunov asymptotically stable, we will prove that the perturbed equation 
		\[H(x,D_{x}u(x),u(x))+\varepsilon P(x,D_{x}u(x),u(x))=0,\quad x\in M,\]	
	does exist viscosity solution $u_{-}^{\varepsilon}$ which converges uniformly to $u_{-}$, as perturbation parameter $\varepsilon$ converges to 0. Moreover, we give a case that in a neighborhood of  viscosity solution  $u_-$, the perturbed equation has an unique viscosity solution $u_{-}^{\varepsilon}$. Furthermore, $u_{-}^{\varepsilon}$ keeps locally Lyapunov asymptotically stability.

		\textbf{Keywords}: Contact Hamilton-Jacobi equations,  Lyapunov stability, perturbed equations, viscosity solutions.
	\end{abstract}

	\tableofcontents

	
	\section{Introduction}
	
	\subsection { A brief introduction to contact Hamilton-Jacobi equation }
	
	Let $M$ be an n-dimensional connected, closed smooth manifold, with the canonical two-form $\Omega=dx\wedge dp$. $(T^{*}M\times \Omega)$ is symplectic manifold.  The classical Hamilton-Jacobi equation is defined on the symplectic manifold.  The classical Hamilton-Jacobi equation   is a very useful tool to describe the mechanical system due to its geometrical properties,  and it is a bridge between classical mechanics and quantum mechanics. This equation describes isolated and conservative system. While real system constantly interacts with the environment that introduces dissipation and irreversible phenomena. Contact Hamilton-Jacobi equation consists in extending the symplectic phase space of classical mechanics by adding an extra dimension, thus dealing with a contact manifold instead of a symplectic one. The additional dimension is a non-trivial dynamical variable rather than time. Contact Hamilton-Jacobi equation  has been found in many fields, such as the dynamics of dissipative systems and thermodynamic.  We refer to \cite{A89},\cite{AHD17}, \cite{dS17},\cite{dV20},\cite{dV19} for more nice introduction, geometrical description and applications of contact Hamiltonian systems. 
	
	The problems of existence and uniqueness of solutions for the stationary Hamilton-Jacobi equation 
	\begin{equation}\label{eq1.1}\tag{C}
		H(x,D_xu,u)=c, \quad x\in M.
	\end{equation}
	have been raised in past forty years. It is obvious that \eqref{eq1.1} is global nonlinear first-order equation and does not necessarily have classical solutions. Thus these problems have been approached by looking for generalized solutions — usually solutions $u\in W^{1,\infty}_{loc}(M)$, which satisfy the equations almost everywhere, such as \cite{E13}, \cite{SNK75}, \cite{PLL82}. At the beginning of 1980s, M. Crandall and P. L. Lions, in \cite{PLL83}, \cite{PLL84}, introduced a notion of viscosity solution of the equation \eqref{eq1.1} which admits nowhere differentiable functions. And they gave the earliest results for fixed constant $c\in\R$ when $H$ is strictly increasing in $u$. For the case of Hamiltonian $H=H(x,D_xu)$ independent of $u$, Lions and his coauthors   proved the existence of a pair $(u,c)\in C(M)\times\R$ solving the equation \eqref{eq1.1} in \cite{PLL87}.   In the late 90s, G. Contreras and his coauthors in \cite{Cont} pointed out that the constant $c$ in \cite{PLL87} is uniquely determined by the Hamiltonian $H$. Meanwhile,  in \cite{Fathi1}-\cite{Fathi3}, A. Fathi built a connection between the theory of viscosity solution for $u$-independent Hamiltonian equation and Aubry-Mather theory, and constantly improved this theory (i.e., weak KAM theory) under suitable assumptions (\textbf{(H1)-(H2)} listed above). In weak KAM theory, Fathi used a nonlinear solution semi-group $\{T^-_t\}_{t\geqslant0}$, generated by the evolutionary Hamilton equation
	\begin{align*}
		\left\{
		\begin{array}{l}
			\partial_tu+H(x,D_xu)=c, \quad (t,u)\in [0,+\infty)\times M,\\[1mm]
			u(0,x)=\varphi(x)\in C(M),
		\end{array}
		\right.
	\end{align*} 
	to explain that the backward weak KAM solutions are viscosity solutions   for Hamilton equation \eqref{eq1.1} with $u$-independent $H$. For more details about weak KAM theory, readers can refer to \cite{Fathi4}.	
	
	The $u$-dependent problem \eqref{eq1.1} is called contact Hamilton-Jacobi equation. Different from the $u$-independent case, the contact Hamilton-Jacobi equations may admit viscosity solutions for different constant $c$. The authors in \cite{Jing} investigated the existence and uniqueness of solutions $(u,c)\in C(T^n)×\R$ to \eqref{eq1.1} in the viscosity sense and they gave various examples to show the non-uniqueness of $(u,c)$ to the contact equation. For more related literature, please refer to \cite{PLL83}, \cite{PLL84}, \cite{Ishii}. Recently, the authors in \cite{WWY17}, \cite{WWY192} established an implicit variational principle and generalized some fundamental results of weak KAM theory from Hamiltonian systems to contact Hamiltonian systems under Tonelli assumption and Lipschitz continuity $|\frac{\partial H}{\partial u}|\leqslant \lambda$. And in \cite{WY17}, they proved the set of all real numbers $c$'s , for which the equation \eqref{eq1.1} admits viscosity solutions, is an interval $\mathfrak{C}$. Moreover the interval $\mathfrak{C}$ satisfies $(c_l,c_r)\subset\mathfrak{C}\subset[c_l,c_r]$ where the end points $c_l,\ c_r$ have the representation formulas 
	$$c_l:=\inf_{u\in C^{\infty}(M)}\sup_{x\in M}H(x,D_{x}u(x),u(x)), \quad c_r:=\sup_{u\in C^{\infty}(M)}\inf_{x\in M}H(x,D_{x}u(x),u(x)).$$
	So based on above researches, the following admissibility assumption is the premise of our main results.
	
	{\bf Admissibility:} $H(x,p,u)$ is admissible: that is to say, $0\in\mathfrak{C}$, where $\mathfrak{C}$ is the interval mentioned in \cite{WY17}.\\
	\medskip

	\subsection{ main results}
	
	In this paper, let $M$ be a smooth, connected, compact Riemannian manifold without boundary. In the sequel, we choose a $C^{\infty}$ Riemannian metric $g$ on the $M$. We call $d$ the distance on $M$ associated with $g$.   Let $H:T^*M\times\R\to \R$  be a $C^{3}$ contact Hamiltonian which depends on the contact variable $u$, where $T^{*}M$ is the cotangent bundle of $M$. We denote by $(x,p,u)$ the points of  $T^{*}M\times\R$ and always assume the contact Hamiltonian $H(x,p,u)$ satisfies the following conditions 
	\medskip
	\begin{itemize}
		\item [\bf(H1)] Positive Definiteness: For every $(x,p,u)\in T^*M\times\R $, the second partial  derivative $\frac{\partial^2 H}{\partial p^2} (x,p,u)$  is positive definite as a quadratic form;
		\item  [\bf(H2)]Superlinearity: For every $(x,u)\in M\times\R$, $H(x,p,u)$  is superlinear in $p$, i.e., for all $(x,u)\in M\times\R$, constant $A<+\infty$, there exists constant $C(A,x,u)<+\infty$ such that $H(x,p,u)\geqslant A\|p\|-C(A,x,u)$ for all $p\in T^*_xM$;
		\item [\bf(H3)]Lipschitz Continuity: $H(x,p,u)$ is uniformly Lipschitz in $u$, i.e. there exists $\lambda>0$ such that
		$
		\Big|\frac{\partial H}{\partial u}(x,p,u)\Big| \leqslant \lambda,
		$ for any  $(x,p,u)\in T^*M\times\R $.
	\end{itemize}
	\medskip
	
	We consider the following equations 
	\begin{align}\label{0.2}\tag{$\mathrm{HJ_s}$}
		H(x,D_xu(x),u(x))=0,
	\end{align}
	and 
	\begin{equation}\label{eqth1}\tag{$\mathrm{HJ_{\varepsilon}}$}
		H(x,D_xu(x),u(x))+\varepsilon P(x,D_xu(x),u(x))=0, 
	\end{equation}
	where $\varepsilon$ is a small perturbation parameter, $P(x,p,u)$ is a given function of class $C_{0}^3(T^*M\times\R, \R)$, and the symbol $D_{x}$ denotes the gradient with respect to variables $x$. Equation \eqref{eqth1} can be regarded as the $\varepsilon$-perturbed version of equation \eqref{0.2}. Notice that by admissibility condition, we know $H(x,p,u)$ is admissible and \eqref{0.2} has a viscosity solution. We wonder if  $H(x,D_{x}u,u)+\varepsilon P(x,D_{x}u,u)$ is admissible. In this paper we are concerned with viscosity solutions of equation  \eqref{eqth1}.

	The work in this paper is devoted to studying the following two problems:
	
	\textbf{(P1)}: Under what conditions does the equation \eqref{eqth1} has a viscosity solution?

	\textbf{(P2)}: Under what conditions does the viscosity solution of equation \eqref{eqth1} is Lyapunov stable?

	\textbf{A closer look at (P1) and (P2)}.

	We recall that under assumptions \textbf{(H1)-(H3)}, J.Yan, K.Wang and L.Wang \cite{WWY192} provided a representation formula for the  semigroup of operators $\{T^{-}_{t}\}_{t\geqslant 0}:C(M,\R)\to C(M,\R)$, such that for each $\varphi\in C(M,\R)$, $T_{t}^{-}\varphi(x)$ is the unique viscosity solution of the  evolutionary equation 
	\begin{align}\label{CLL}	
		\left\{
		\begin{array}{l}
			u_{t}(t,x)+H(x,D_xu(t,x),u(t,x))=0,\\[1mm]
			u(0,x)=\varphi(x).
		\end{array}
		\right.	
	\end{align}
	$(C(M,\R),\{T_{t}^{-}\}_{t\geqslant 0},\R^+)$ can be viewed as a dynamical system. If there exists $\varphi\in C(M,R)$ such that $T_{t}^{-}\varphi=\varphi,\forall t\geqslant0$, then $\varphi$ is the fixed point of the dynamical system determined by $\{T_{t}^{-}\}_{t\geqslant 0}$. Then it can be proved that $\varphi$ is the stationary viscosity solution of \eqref{CLL}. 
	In particular, See \cite [Proposition 2.7]{WWY19},	we know that a function $u_{-}$ is a viscosity solutions of \eqref{0.2} if and only if $T^{-}_{t}u_{-}=u_{-}$, for any $t\geqslant 0$.  If we denote by $\{T^{\varepsilon}_t\}_{t\geqslant0}$ the semigroup of operators generated by 
	\begin{align}\label{CL}	
		u_{t}(t,x)+H(x,D_xu(t,x),u(t,x))+\varepsilon P(x,D_xu(t,x),u(t,x))=0,
	\end{align}
	Then our problem \textbf{(P1)} is transformed into finding a sufficient condition such that  $T^-_{t}u_-=u_-,\ \forall t\geqslant 0$ for some $u_-\in C(M,\R)$,then which means the existence of the function $u^{\varepsilon}_{-}\in C(M,\R)$ satisfying $T^{\varepsilon}_{t}u^{\varepsilon}_{-}=u^{\varepsilon}_{-}$, for any $t\geqslant 0$. To solve this problem, we will use weak KAM theory and the information of  viscosity solutions   $u_{-}$  of \eqref{0.2}.

	Lyapunov stability is of great importance in the study of dynamical system. For \textbf{(P2)}, we consider the   Lyapunov stability for 
	the fixed points $u^{\varepsilon}_-$ of dynamical systems $(C(M,\R),\{T_{t}^{\varepsilon}\}_{t\geqslant 0},\R^+)$. We assume that the fixed point $u_-$ is Lyapunov stable and wonder whether $u^{\varepsilon}_-$ retain the Lyapunov stability. Let us recall the definition of Lyapunov stability in the setting of this paper. 
	
	\begin{definition}
		Let	$u\in C(M,\R)$ be a  viscosity solution of  \eqref{0.2}. Then
		\begin{itemize}
			\item [(i)] $u$ is called stable (or Lyapunov stable) if for any $\eps>0$ there exists $\delta>0$ such that for any $\varphi\in C(M,\R)$ with $\|\va-u\|_\infty<\delta$ there holds 
			\[
			\|T^-_t\va-u\|_\infty<\eps, \quad \forall t>0.
			\]
			Otherwise, $u$ is unstable (or Lyapunov unstable).
			\item [(ii)] $u$ is called asymptotically stable (or Lyapunov asymptotically stable) if it is stable and there is $\delta>0$ such that 
			\[
			\lim_{t\to+\infty}\|T^-_t\va-u\|_\infty=0
			\]
			for any $\varphi\in C(M,\R)$ satisfying $\|\va-u\|_\infty<\delta$. If $\delta$ can be $+\infty$, we say that $u$ is globally  asymptotically stable.
		\end{itemize}
	\end{definition}
	
	The result of Lyapunov stability problem has been made great progress by many mathematics. Nowadays, this topic is still very active. For dynamical system, Lyapunov's direct method and Lyapunov's indirect method often used to analysis the stability for some solutions. Readers can refer to \cite{H69},\cite{JH69},\cite{NK63},\cite{VZ64}. Since the equation \eqref{eqth1} is not concrete, it's difficult to get Lyapunov function by the methods in the reference mentioned above. Recently, Y. Ruan, K. Wang and J. Yan in \cite{RWY} made a comprehensive analysis for the Lyapunov stability (including asymptotic stability, and instability) of the fixed points within dynamical system $(C(M,\R),\{T_{t}^{-}\}_{t\geqslant 0},\R^+)$ and also prove several uniqueness results for stationary viscosity solutions. Y. Xu, J. Yan and K. Zhao in \cite{Z24} also provided a simple sufficient condition to guarantee that the viscosity solution of equation \eqref{0.2} is Lyapunov asymptotically stable. In the problem \textbf{(P2)}, we focus on Lyapunov stability for stationary viscosity solutions and try to figure out whether this property still holds for the solution of perturbed equation \eqref{eqth1} under the same conditions in \cite{Z24}.  
	
	In this paper, let $\mathcal{S}^-$ and $\mathcal{S}^{\varepsilon}$ denote the set of all viscosity solutions of equation \eqref{0.2} and \eqref{eqth1}, respectively.
	
	The following Theorem \ref{thm1}, as the first main result of this paper, solves the existence of solutions to the perturbed equation. Under the assumptions in Theorem \ref{thm1}, we show that the viscosity solution $u^{\varepsilon}_-$ (gotten in Theorem \ref{thm1}) of perturbed equation \eqref{eqth1} uniformly converges to $u_-$ as $\varepsilon$ tends to $0$.
	\begin{theorem}\label{thm1}
		 Suppose $P\in C_{0}^{3}(T^{*}M\times\R,\R)$ and for all $(x,p,u)\in T^*M\times\R,\ |P(x,p,u)|\leqslant1$,  $u_{-}\in \mathcal{S}^{-}$. If $u_-$ is locally Lyapunov asymptotically stable, then given $\delta>0$, there has $\varepsilon_{\delta}>0$ such that, for all $\varepsilon\in[0,\varepsilon_{\delta}]$, the equation \eqref{eqth1} exists at least one viscosity solution $u^{\varepsilon}_{-}$ satisfying
		$\|u^{\varepsilon}_{-}-u_{-}\|\leqslant\delta$.
	\end{theorem}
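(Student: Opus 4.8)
The plan is to recast ``\eqref{eqth1} has a viscosity solution within $\delta$ of $u_-$'' as ``the perturbed Lax--Oleinik semigroup $\{T^\varepsilon_t\}_{t\ge0}$ has a common fixed point in the ball $\overline{B}(u_-,\delta):=\{\varphi\in C(M,\R):\|\varphi-u_-\|_\infty\le\delta\}$'', and to produce such a fixed point by exploiting that $u_-$ is a fixed point of $\{T^-_t\}$ around which, by local asymptotic stability, the unperturbed orbits are trapped. I would first record the preliminary facts used freely below: since $|P|\le1$ and $P\in C^3_0$, the Hamiltonian $H+\varepsilon P$ still satisfies \textbf{(H1)}--\textbf{(H3)} for $\varepsilon$ small (positive-definiteness holds because $\partial^2_pH$ has a positive lower bound on the compact set $\supp P$ and $\varepsilon\partial^2_pP$ is small; the Lipschitz constant in $u$ is $\lambda+O(\varepsilon)$), so $\{T^\varepsilon_t\}$ is a well-defined, order-preserving, $C^0$-continuous semigroup with the standard one-step regularizing estimate: for $t\ge1$, $T^\varepsilon_t\varphi$ is Lipschitz with a constant depending only on $t$ and on bounds of $H+\varepsilon P$, hence uniform for $\varphi$ in bounded sets and $\varepsilon$ small. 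By the analogue of \cite[Proposition~2.7]{WWY19}, $u^\varepsilon_-\in\mathcal{S}^\varepsilon$ iff $T^\varepsilon_tu^\varepsilon_-=u^\varepsilon_-$ for all $t\ge0$.

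The first technical step is a Gronwall-type comparison estimate: $\|T^\varepsilon_t\varphi-T^-_t\varphi\|_\infty\le\frac{\varepsilon}{\lambda}(e^{\lambda t}-1)$ for every $\varphi\in C(M,\R)$ and $t\ge0$. This follows by checking that $T^-_t\varphi\pm\frac{\varepsilon}{\lambda}(e^{\lambda t}-1)$ is a super-/subsolution of the evolutionary equation \eqref{CL} with initial datum $\varphi$ --- which uses only \textbf{(H3)} and $|P|\le1$ --- and then invoking the comparison principle for \eqref{CL}. The second step is where the hypothesis is used. Local asymptotic stability of $u_-$, together with the $C^0$-continuity of $T^-_t$ and the one-step regularization (which turns $\overline{T^-_1(\overline{B}(u_-,r))}$ into a compact subset of the basin for $r$ small), upgrades by a standard covering argument the pointwise attraction $T^-_t\varphi\to u_-$ to uniform attraction on that compact set. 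Carrying this out at two scales produces nested balls $\overline{B}(u_-,\rho')\subset\overline{B}(u_-,\rho)\subset\overline{B}(u_-,\delta)$ and a time $t_0>0$, all depending on $\delta$ but not on $\varepsilon$, such that $T^-_{t_0}(\overline{B}(u_-,\rho'))\subseteq\overline{B}(u_-,\rho'/2)$, the $T^-$-orbit of $\overline{B}(u_-,\rho')$ stays in $\overline{B}(u_-,\rho)$, and the $T^-$-orbit of $\overline{B}(u_-,\rho)$ stays in $\overline{B}(u_-,\delta)$. Since $t_0$ is independent of $\varepsilon$, the finite-time error $\frac{\varepsilon}{\lambda}(e^{\lambda t_0}-1)$ can be absorbed: choosing $\varepsilon_\delta$ small enough (a routine bookkeeping with the constants), the first estimate upgrades all three statements to their $T^\varepsilon$-counterparts for every $\varepsilon\le\varepsilon_\delta$ --- in particular $T^\varepsilon_{t_0}(\overline{B}(u_-,\rho'))\subseteq\overline{B}(u_-,\rho')$ and the $T^\varepsilon$-orbit of $\overline{B}(u_-,\rho')$ stays in $\overline{B}(u_-,\rho)$, while the $T^\varepsilon$-orbit of $\overline{B}(u_-,\rho)$ stays in $\overline{B}(u_-,\delta)$.

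Now $T^\varepsilon_{t_0}=T^\varepsilon_{t_0-1}\circ T^\varepsilon_1$ (assume $t_0\ge1$, as we may) maps bounded sets into precompact sets, so $T^\varepsilon_{t_0}$ is a continuous self-map of the closed bounded convex set $\overline{B}(u_-,\rho')$ with precompact image; Schauder's fixed point theorem gives $\varphi^*\in\overline{B}(u_-,\rho')$ with $T^\varepsilon_{t_0}\varphi^*=\varphi^*$. \emph{This is where the real obstacle lies}: $\varphi^*$ is a priori only a $t_0$-periodic point of $\{T^\varepsilon_t\}$, and in the contact setting $t\mapsto T^\varepsilon_t\varphi^*$ need not be constant and $\{T^\varepsilon_t\}$ need not converge as $t\to\infty$, so $\varphi^*$ need not belong to $\mathcal{S}^\varepsilon$. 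I would bypass this with the upper envelope $\Phi:=\sup_{s\in[0,t_0]}T^\varepsilon_s\varphi^*\in C(M,\R)$: since $T^\varepsilon_t$ is order-preserving and $s\mapsto T^\varepsilon_s\varphi^*$ is $t_0$-periodic, $T^\varepsilon_t\Phi\ge\sup_{s\in[0,t_0]}T^\varepsilon_{t+s}\varphi^*=\Phi$ for all $t\ge0$, hence $t\mapsto T^\varepsilon_t\Phi$ is non-decreasing. Moreover $T^\varepsilon_s\varphi^*\in\overline{B}(u_-,\rho)$ for all $s$, so $\Phi\in\overline{B}(u_-,\rho)$, and therefore the entire orbit $\{T^\varepsilon_t\Phi\}_{t\ge0}$ stays in $\overline{B}(u_-,\delta)$ and is uniformly bounded. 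A non-decreasing, uniformly bounded orbit converges pointwise, hence --- by the one-step regularization and Dini's theorem --- uniformly, to some $u^\varepsilon_-$ with $\|u^\varepsilon_--u_-\|_\infty\le\delta$; and a uniform limit of a monotone orbit is a fixed point of the whole semigroup (apply $T^\varepsilon_\tau$ and use its continuity together with the semigroup property), so $u^\varepsilon_-\in\mathcal{S}^\varepsilon$. For $\varepsilon=0$ take $u^0_-=u_-$.

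I expect the effort to concentrate on two points. The first is making the second paragraph genuinely $\varepsilon$-uniform: passing from the qualitative Lyapunov asymptotic stability of $u_-$ to the quantitative trapping of nested balls requires the compactness supplied by the one-step regularization, because balls in $C(M,\R)$ are not compact. The second is the promotion, in the third paragraph, of the Schauder $t_0$-periodic point to a true stationary solution. An alternative for this last point would be to observe that a jointly continuous one-parameter semigroup on a compact convex invariant set has a common fixed point --- the sets $\mathrm{Fix}(T^\varepsilon_q)$, $q\in\mathbb{Q}^+$, have the finite-intersection property because $\mathrm{Fix}(T^\varepsilon_s)\subseteq\mathrm{Fix}(T^\varepsilon_{ns})$ --- but constructing a convex set invariant under the non-affine maps $T^\varepsilon_t$ is itself the sticking point, which the upper-envelope device avoids.
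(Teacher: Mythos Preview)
Your argument is correct, but it takes a considerably more circuitous route than the paper's. The key difference is that you treat the invariant set $\overline{B}(u_-,\rho')$ as a purely topological object and invoke Schauder to produce a $t_0$-periodic point, which you then have to promote to a genuine fixed point via the upper-envelope trick. The paper instead exploits the \emph{order structure} of the semigroup from the start: it tests only the two specific functions $u_-\pm c$ (with $c=\min\{\delta,\delta_0\}$), uses asymptotic stability to get $\|T^-_{t_\delta}(u_-\pm c)-u_-\|_\infty\le c/2$, and then the same Gronwall estimate you prove gives $T^\varepsilon_{t_\delta}(u_--c)\ge u_--c$ and $T^\varepsilon_{t_\delta}(u_-+c)\le u_-+c$ directly. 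Monotonicity then makes the discrete orbit $n\mapsto T^\varepsilon_{nt_\delta}(u_--c)$ increasing and bounded above by $u_-+c$, with no need for Schauder, periodic points, or envelopes. The paper finishes by setting $u^\varepsilon_-:=\liminf_{t\to\infty}T^\varepsilon_t u_-$ and checking it is a fixed point of the full semigroup (using that the Lax--Oleinik formula commutes with pointwise infima).

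What each approach buys: the paper's argument is shorter and uses only the order-preserving property and the two concrete barriers $u_-\pm c$, so it never needs compactness of the invariant set or a covering argument to upgrade pointwise to uniform attraction---asymptotic stability is invoked only at the two points $u_-\pm c$. Your approach is more robust in that it would survive in settings where constant shifts of $u_-$ are not available as natural sub/supersolutions, but here that generality costs you the Schauder step and the periodic-to-stationary promotion, both of which the order structure makes unnecessary. In short, your upper-envelope device is doing by hand what the barrier $u_--c$ does for free: producing a $\psi$ with $T^\varepsilon_{t_0}\psi\ge\psi$.
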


	Denote
	\[
	\Lambda_{u_-}:=\operatorname{cl}\Big(\big\{(x,p,u): x \ \text{is a differentiable  point of } \ u_-, p=D_xu_-(x),u=u_-(x)\big\}\Big).
	\]
	
	The following Theorem \ref{thm2}, as the second prime result of this paper, solves the stability of  solutions to perturbed equation. Under the assumptions in Theorem \ref{thm2}, we can get the conclusions of Theorem \ref{thm1} and   the viscosity solution $u^{\varepsilon}_{-}$ is locally Lyapunov asymptotically stable. 
	
	\begin{theorem}\label{thm2}
	 Suppose $P\in C_{0}^{3}(T^{*}M\times\R,\R)$ and for all $(x,p,u)\in T^*M\times\R, \ |P(x,p,u)|\leqslant1\ $, $u_{-}\in \mathcal{S}^{-}$. If for all $(x,p,u)\in\Lambda_{u_-}$, $\frac{\partial H}{\partial u}(x,p,u)>0$,  then we can find a constant $\delta_0>0$ such that for any $\delta\in[0,\delta_{0}]$ and $\varepsilon\in[0,\varepsilon_{\delta}]$, the equation \eqref{eqth1} has a unique viscosity solution $u^{\varepsilon}_{-}$ satisfying
		$$\|u^{\varepsilon}_{-}-u_{-}\|\leqslant\delta,$$
		where $\varepsilon_{\delta}$ is a positive constant depending on $\delta$. Moreover, $u^{\varepsilon}_{-}$ is locally asymptotically stable.
	\end{theorem}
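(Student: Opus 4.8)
\noindent\emph{Proof plan.} The plan is to reduce the statement to Theorem~\ref{thm1} together with the Lyapunov stability criterion of \cite{Z24}, after first \emph{localising} the strict monotonicity hypothesis to a fixed neighbourhood of $\Lambda_{u_-}$ that survives the $\varepsilon P$-perturbation. To begin, observe that ``$\frac{\partial H}{\partial u}(x,p,u)>0$ on $\Lambda_{u_-}$'' is exactly the sufficient condition of \cite{Z24}, so $u_-$ is locally Lyapunov asymptotically stable; Theorem~\ref{thm1} then provides, for every $\delta>0$, a threshold $\varepsilon_\delta>0$ and viscosity solutions $u^\varepsilon_-\in\mathcal{S}^{\varepsilon}$ with $\|u^\varepsilon_--u_-\|\leqslant\delta$ for all $\varepsilon\in[0,\varepsilon_\delta]$. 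Hence only two things remain to be established: uniqueness of such a solution in $\{v\in\mathcal{S}^{\varepsilon}:\|v-u_-\|\leqslant\delta\}$, and local asymptotic stability of $u^\varepsilon_-$.

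First I would fix a uniform neighbourhood of $\Lambda_{u_-}$. Since viscosity solutions of \eqref{0.2} are Lipschitz under \textbf{(H1)}--\textbf{(H3)}, the set $\Lambda_{u_-}$ is compact; as $\frac{\partial H}{\partial u}$ is continuous and strictly positive there, choose an open $\mathcal N\supset\Lambda_{u_-}$ with compact closure and a constant $\kappa>0$ with $\frac{\partial H}{\partial u}\geqslant 2\kappa$ on $\overline{\mathcal N}$. Because $P\in C_0^3$ has bounded derivatives, $\frac{\partial(H+\varepsilon P)}{\partial u}\geqslant\kappa$ on $\overline{\mathcal N}$ once $\varepsilon$ is small; for such $\varepsilon$ one checks, using $|P|\leqslant1$ and the compactness of $\operatorname{supp}P$, that $H+\varepsilon P$ still satisfies \textbf{(H1)}--\textbf{(H3)}, and that it is admissible because \eqref{eqth1} already possesses the solution $u^\varepsilon_-$. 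I shrink $\varepsilon_\delta$ so that all of this holds.

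Next I would show there is $\delta_0>0$ such that, after a further shrinking of $\varepsilon_\delta$, every $v\in\mathcal{S}^{\varepsilon}$ with $\|v-u_-\|\leqslant\delta\leqslant\delta_0$ satisfies $\Lambda_v\subset\mathcal N$ ($\Lambda_v$ being defined as $\Lambda_{u_-}$ with $v$ in place of $u_-$); in particular $\Lambda_{u^\varepsilon_-}\subset\mathcal N$. This is by contradiction and compactness: if $\delta_n\to0$, $\varepsilon_n\to0$ and $v_n\in\mathcal{S}^{\varepsilon_n}$ carry points of $\Lambda_{v_n}$ outside $\mathcal N$, then — using that \textbf{(H2)} makes the $v_n$ uniformly Lipschitz, and that $v_n\to u_-$ uniformly while $H+\varepsilon_nP\to H$ — the backward calibrated curves through those points converge to a backward calibrated curve for $u_-$, producing a limit point in $\Lambda_{u_-}\subset\mathcal N$, a contradiction. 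With this in hand, applying the \cite{Z24} criterion to the admissible Tonelli Hamiltonian $H+\varepsilon P$ and its solution $u^\varepsilon_-$ (legitimate since $\frac{\partial(H+\varepsilon P)}{\partial u}\geqslant\kappa>0$ on $\Lambda_{u^\varepsilon_-}\subset\overline{\mathcal N}$) yields that $u^\varepsilon_-$ is locally Lyapunov asymptotically stable. For uniqueness, let $v\in\mathcal{S}^{\varepsilon}$ with $\|v-u_-\|\leqslant\delta$; I would argue either (a) that the proof of \cite{Z24} in fact produces a basin of attraction of $u^\varepsilon_-$ controlled only by $\mathcal N$ and $\kappa$, hence persisting uniformly in $\varepsilon$ and containing $B_\delta(u_-)\ni v$, so that $v=\lim_{t\to+\infty}T^\varepsilon_t v=u^\varepsilon_-$; or (b) run the doubling-of-variables comparison for \eqref{eqth1}, exploiting that $H+\varepsilon P$ is strictly increasing in $u$ on $\overline{\mathcal N}$, which by the previous step contains all relevant test data of $v$ and $u^\varepsilon_-$, to conclude $v\leqslant u^\varepsilon_-\leqslant v$.

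The main obstacle is the localisation underlying these last two steps: one must convert the purely pointwise hypothesis $\frac{\partial H}{\partial u}>0$ on $\Lambda_{u_-}$ into statements uniform over all perturbed solutions near $u_-$. The two delicate ingredients are the upper semicontinuity $\limsup_n\Lambda_{v_n}\subseteq\Lambda_{u_-}$ for viscosity solutions of converging Tonelli contact Hamilton--Jacobi equations (which rests on compactness and convergence of calibrated curves together with the regularity theory behind weak KAM for contact Hamiltonians), and — for route (a) — isolating from \cite{Z24} a basin of attraction depending only on $\mathcal N$ and $\kappa$, or — for route (b) — verifying that the comparison argument only ever encounters test points whose graphs lie in $\overline{\mathcal N}$. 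The remaining pieces (the reduction via \cite{Z24}, the persistence of \textbf{(H1)}--\textbf{(H3)} and of the lower bound on $\partial_u(H+\varepsilon P)$ under the $\varepsilon P$-perturbation, and the concluding fixed-point argument) are routine.
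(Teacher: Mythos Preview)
Your plan is correct and follows essentially the same route as the paper: the paper first invokes the \cite{Z24} criterion (stated as Lemma~\ref{lemma4.1}) to verify the hypothesis of Theorem~\ref{thm1}, then proves as its key Lemma~\ref{prop5} exactly the upper semicontinuity $\operatorname{dist}(\Lambda_{u^\varepsilon_-},\Lambda_{u_-})\to 0$ via a contradiction argument using uniform Lipschitz bounds and Arzel\`a--Ascoli on calibrated curves (your third paragraph), and concludes stability by reapplying the \cite{Z24} criterion to $H^\varepsilon$. Uniqueness in the paper is obtained via your route~(a).
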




	
	
	\subsection{Notations}
	
	We write as follows a list of symbols used throughout this paper.
	\begin{itemize}

		\item  $D_{x}u(x)=(\frac{\partial u}{\partial x_1},\dots,\frac{\partial u}{\partial x_n})$.
		\item $C^{k}_{0}(T^{*}M\times \R,\R)$ stands for the space of all k-times continuously differentiable  and has compact support functions on $T^{*}M\times \R$.
		\item $\mathcal{S}^{-}$ denotes the set of all backward weak KAM solutions of equation \eqref{0.2}.
		\item $L^\varepsilon$  denotes the Lagrangian associated to $H(x,p,u)+\varepsilon P(x,p,u)$.
		\item  $\phi^{H}_{t}$ denotes the local flow of contact Hamiltonian system \eqref{c}.
		\item $\mathcal{S}^\varepsilon$  denotes the set of all  backward weak KAM solutions of equation \eqref{eqth1}.
		\item $h^{\varepsilon}_{x_0,u_0}(x,t)$ 
		denotes the forward 
		implicit action function associated with $L^{\varepsilon}$.
		\item $\{T^{\varepsilon}_t\}_{t\geqslant 0}$  denotes the backward  solutions semigroup associated with $L^{\varepsilon}$.
	\end{itemize}

	\medskip


	The rest of the paper is organized as follows. Section 2 collects some preliminary results about unperturbed contact Hamiltonian equations and  states some new results about perturbed contact Hamiltonian equations.  Section 3 is devoted to prove Theorem \ref{thm1}, and Section 4  is devoted to prove  Theorem \ref{thm2}.

	\section{ Preliminaries}

	\subsection{The Weak KAM Theorem}
	\begin{itemize}
		\item[$\bullet$]\textbf{ Implicit action function} 
		\medskip
	\end{itemize}

	The Lagrangian  $L(x,\dot{x},u)$ associated to contact Hamiltonian $H(x,p,u)$ is defined as
	\[
	L(x,\dot{x},u):=\sup_{p\in T^*_xM}\{\langle \dot{x},p\rangle-H(x,p,u)\},\quad \forall(x,\dot{x},u)\in TM\times\R,
	\]
	where $\langle \cdot,\cdot\rangle$ represents the canonical pairing between the tangent and cotangent space. By \textbf{(H1)-(H3)}, $L$ satisfies the following properties
	\medskip
	\begin{itemize}
		\item [\bf(L1)]Positive definiteness: For every $(x,\dot{x},u)\in TM\times\R $, the second partial derivative $\frac{\partial^2 L}{\partial \dot{x}^2} (x,\dot{x},u)$ is positive definite as a quadratic form;
		\item  [\bf(L2)]Superlinearity: For each $(x,u)\in M\times\R$, $L(x,\dot{x},u)$ is superlinear in $\dot{x}$, i.e., for all $(x,u)\in M\times\R$, constant $A<+\infty$, there exists constant $C(A,x,u)<+\infty$ such that $L(x,\dot{x},u)\geqslant A\|\dot{x}\|-C(A,x,u)$ for all $\dot{x}\in T_xM$;
		\item [\bf(L3)]Lipschitz continuity: $L(x,\dot{x},u)$ is uniformly Lipschitz in $u$, i.e. there exists $\lambda>0$ such that
		$$
		\Big|\frac{\partial L}{\partial u}(x,\dot{x},u)\Big| \leqslant \lambda,\quad \forall(x,\dot{x},u)\in TM \times\R.	
		$$
	\end{itemize}

	In contact coordinates, the contact Hamiltonian vector field $X_{H}$ can be explicitly written as
	\begin{align}\label{c}\tag{CH}
		\left\{
		\begin{array}{l}
			\dot{x}=\frac{\partial H}{\partial p}(x,p,u),\\[1mm]
			\dot{p}=-\frac{\partial H}{\partial x}(x,p,u)-\frac{\partial H}{\partial u}(x,p,u)p,\qquad (x,p,u)\in T^*M\times\mathbf{R},\\[1mm]
			\dot{u}=\frac{\partial H}{\partial p}(x,p,u)\cdot p-H(x,p,u).
		\end{array}
		\right.
	\end{align}	
	Then we can show the implicit variational principle for contact Hamilton's equations \eqref{c} in \cite{WWY17} as following.
	
	\begin{proposition} (Implicit action function \cite[Theorem A]{WWY17}) \label{prop4}
		For any given $x_0\in M$, $u_0\in\R$, there exists  continuous function  $h_{x_0,u_0}(x,t)$  defined on $M\times(0,+\infty)$ satisfying	
		\begin{equation}\label{2-1}
			h_{x_0,u_0}(x,t)=u_0+\inf_{\substack{\gamma(0)=x_{0} ,  \gamma(t)=x}}\int_0^tL\big(\gamma(\tau),\dot{\gamma}(\tau),h_{x_0,u_0}(\gamma(\tau),\tau)\big)d\tau,
		\end{equation}
		where the infimum is taken among the Lipschitz continuous curves $\gamma:[0,t]\rightarrow M$.
		Moreover, the  infimum  in  \eqref{2-1}  can be achieved.
		If $\gamma_1$ is curve achieving the  infimum  in \eqref{2-1}, then $\gamma_1$ is of class $C^1$.
		Let
		\[x(s):=\gamma_{1}(s),\quad u(s):=h_{x_{0},u_{0}}(\gamma_{1}(s),s),\quad  p(s):=\frac{\partial L}{\partial \dot{x}}(\gamma_{1}(s),\dot{\gamma_{1}}(s),u(s)).\]
		Then $(x(s),p(s),u(s))$ satisfies  equations \eqref{c} with $x(0)=x_0,\ x(t)=x,\ \lim_{s\rightarrow0^+}u(s)=u_0$.
	\end{proposition}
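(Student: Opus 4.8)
The plan is to obtain $h_{x_0,u_0}$ as the unique fixed point of an operator built from an ordinary (``frozen'') Tonelli variational problem, and then to extract the implicit relation \eqref{2-1}, the attainment and regularity of minimizers, and the dynamics from dynamic programming together with the Legendre duality between \textbf{(L1)--(L3)} and \textbf{(H1)--(H3)}. First I would fix $x_0,u_0$ and, on the Banach space $C(M\times[0,T])$ equipped with the weighted norm $\|w\|_K:=\sup_{(x,s)}e^{-Ks}|w(x,s)|$ for a constant $K>\lambda$, define
\[
(\Phi w)(x,t):=u_0+\inf_{\gamma(0)=x_0,\ \gamma(t)=x}\int_0^t L\big(\gamma(\tau),\dot\gamma(\tau),w(\gamma(\tau),\tau)\big)\,d\tau ,
\]
the infimum being over Lipschitz curves $\gamma:[0,t]\to M$. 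By \textbf{(L2)} the integrand is bounded below (take $A=0$, using compactness of $M$ and boundedness of $w$), so the infimum is finite; evaluating on a single geodesic competitor gives an upper bound, and a routine estimate shows $\Phi w\in C(M\times[0,T])$.

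For the contraction: given $w_1,w_2$ and any admissible $\gamma$, \textbf{(L3)} yields $|L(\gamma,\dot\gamma,w_1(\gamma,\cdot))-L(\gamma,\dot\gamma,w_2(\gamma,\cdot))|\le\lambda|w_1-w_2|$ pointwise, whence $|(\Phi w_1)(x,t)-(\Phi w_2)(x,t)|\le\lambda\int_0^t\|w_1(\cdot,\tau)-w_2(\cdot,\tau)\|_\infty\,d\tau\le(\lambda/K)e^{Kt}\|w_1-w_2\|_K$, i.e. $\|\Phi w_1-\Phi w_2\|_K\le(\lambda/K)\|w_1-w_2\|_K$, a strict contraction since $K>\lambda$. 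Banach's fixed point theorem then gives, for every $T>0$, a unique $h\in C(M\times[0,T])$ with $h=\Phi h$; these patch to a continuous $h_{x_0,u_0}$ on $M\times(0,+\infty)$ satisfying \eqref{2-1}, \emph{once} the infimum defining $\Phi h$ is known to be attained.

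Attainment and the remaining claims would then be obtained by standard Tonelli arguments made to cohere with the implicit dependence. From $h=\Phi h$ one first shows $h$ is locally bounded (Gronwall applied to $s\mapsto v_\gamma(s)$, the solution of $\dot v=L(\gamma,\dot\gamma,v)$, $v(0)=u_0$, for a good competitor $\gamma$); hence a minimizing sequence $\gamma_n$ for $\Phi h(x,t)$ has bounded action, so by \textbf{(L2)} bounded $\int_0^t\|\dot\gamma_n\|^2$, hence is equi-absolutely-continuous on $[0,t]$; Arzel\`a--Ascoli plus Tonelli lower semicontinuity (convexity in $\dot x$ from \textbf{(L1)}, together with $h(\gamma_n,\cdot)\to h(\gamma_1,\cdot)$ uniformly) produce a minimizer $\gamma_1$. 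Splitting an admissible curve at an intermediate time $s$ and optimizing each half gives the Markov identity $h_{x_0,u_0}(x,t)=\inf_{\gamma(s)=y}\big\{h_{x_0,u_0}(y,s)+\int_s^tL(\gamma,\dot\gamma,h_{x_0,u_0}(\gamma,\cdot))\big\}$, from which $s\mapsto h_{x_0,u_0}(\gamma_1(s),s)$ solves $\dot u=L(\gamma_1,\dot\gamma_1,u)$ with $u(0^+)=u_0$, and in particular $u(s):=h_{x_0,u_0}(\gamma_1(s),s)$ coincides with $v_{\gamma_1}(s)$. Setting $p(s):=\partial L/\partial\dot x(\gamma_1(s),\dot\gamma_1(s),u(s))$, a first variation of $\gamma_1$ inside $\int_0^tL(\gamma,\dot\gamma,h(\gamma,\cdot))\,d\tau$ gives a weak Euler--Lagrange equation; bootstrapping upgrades $\gamma_1$ to $C^1$, so $u\in C^1$ and $p$ is continuous. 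Legendre duality gives $\dot x=\partial H/\partial p$, the Euler--Lagrange equation rewrites as $\dot p=-\partial H/\partial x-(\partial H/\partial u)p$, and $\dot u=L=\langle\dot x,p\rangle-H$ is the third line of \eqref{c}, with $x(0)=x_0$, $x(t)=x$, $u(0^+)=u_0$ as required.

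I expect the main obstacle to be the first-variation computation in the last step: because the third argument of $L$ along $\gamma$ is the value function $h$ evaluated \emph{on the curve itself}, perturbing $\gamma_1$ also perturbs that argument, so the Euler--Lagrange equation is not produced by a bare integration by parts; one must linearise the ODE $\dot u=L(\gamma,\dot\gamma,u)$ along the variation and check that the resulting extra contributions reassemble exactly into the $-(\partial H/\partial u)p$ term of \eqref{c}. A secondary difficulty is keeping the compactness and lower-semicontinuity step consistent with the implicit $u$-dependence; freezing $u$ to the already-constructed fixed point $h$, so that the functional becomes an honest Tonelli integral depending continuously on the curve through $\gamma\mapsto h(\gamma,\cdot)$, is what renders that step routine.
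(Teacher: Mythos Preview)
The paper does not prove this proposition; it is quoted as \cite[Theorem~A]{WWY17} and used as a black box, so there is no in-paper argument to compare against.

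Your outline is broadly sound and close in spirit to \cite{WWY17}, but with one organizational difference and one genuine technical slip. The slip: $\Phi$ does not map $C(M\times[0,T])$ to itself. For $t\to 0^+$ and $x\neq x_0$ any admissible curve has speed of order $d(x_0,x)/t$, and superlinearity \textbf{(L2)} forces the integral to diverge like $d(x_0,x)^2/t$; thus $(\Phi w)(x,t)\to+\infty$ and $\Phi w\notin C(M\times[0,T])$. You cannot simply restrict to $C(M\times[\delta,T])$ either, since the curves in the infimum live on $[0,t]$ and you need $w$ there. The fix used in \cite{WWY17} is to abandon the function-space fixed point and instead work curve-wise: for each Lipschitz $\gamma$ with $\gamma(0)=x_0$ solve the Carath\'eodory ODE $\dot u=L(\gamma,\dot\gamma,u)$, $u(0)=u_0$, obtaining $u_\gamma(\cdot)$, and set $h_{x_0,u_0}(x,t):=\inf\{u_\gamma(t):\gamma(t)=x\}$. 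The implicit relation \eqref{2-1} is then a consequence of dynamic programming rather than the definition.

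This reorganisation also cleans up the first-variation step you correctly flagged as the crux. Varying $u_\gamma(t)$ in $\gamma$ produces the linearised ODE $\dot w=L_u\,w+L_x\cdot\eta+L_{\dot x}\cdot\dot\eta$, and the integrating factor $\exp(-\int_0^s L_u)$ converts minimality directly into an Euler--Lagrange equation already containing the $-(\partial H/\partial u)\,p$ term, with no appeal to differentiability of $h$ in $x$. In your frozen formulation that term would have to come from $L_u\cdot\partial_x h(\gamma_1(s),s)$, which requires first knowing that $\partial_x h$ exists along $\gamma_1$ and equals $p(s)$---true, but only \emph{after} one has established the dynamics, so there is a circularity to untangle. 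Your own proposed remedy (``linearise the ODE $\dot u=L(\gamma,\dot\gamma,u)$ along the variation'') is exactly the \cite{WWY17} device; carrying it out amounts to switching to their construction.
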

	
	In proposition \ref{prop4}, $h_{x_0,u_0}(x,t)$ is the implicit action function and the  curves achieving infimum in (\ref{2-1}) are the minimizer of $h_{x_0,u_0}(x,t)$. In the following, we show some propositions of the implicit action function.
	\begin{proposition}(\cite[Theorem C, Theorem D]{WWY17},\cite[Prposition 3.3]{WWY192})\label{prop3}
		\item [(1)] Given $x_0\in M$, $u_0\in \R$,for all $s,t>0$ and all $x\in M$, we have
		$$h_{x_0,u_0}(x,t+s)=\inf_{y\in M}h_{y,h_{x_0,u_0}(y,t)}(x,s)$$
		 Moreover, the infimum is attained at $y$ if and only if there exists a minimizer $\gamma$ of $h_{x_0,u_0}(x,t+s)$ with $\gamma(t)=y$.
		\item [(2)] The function $(x_0,u_0,x,t)\to h_{x_0,u_0}(x,t)$ is Lipschitz continuous on $M\times[a,b]\times M\times[c,d]$ for all real numbers $a,b,c,d$ with $a<b$ and $0<c<d$.
		\item [(3)] Given $x_0\in M$ and $u_1$, $u_2\in \R$,
		if $u_1<u_2$, then $h_{x_0,u_1}(x,t)<h_{x_0,u_2}(x,t)$, for all $(x,t)\in M\times(0,+\infty)$.
	\end{proposition}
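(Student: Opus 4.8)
The plan is to deduce all three items from the implicit variational principle of Proposition~\ref{prop4}, using as the main working tool the reformulation established in \cite{WWY17}:
\[
h_{x_0,u_0}(x,t)=\inf\Big\{w_\gamma(t):\ \gamma\in\mathrm{Lip}([0,t],M),\ \gamma(0)=x_0,\ \gamma(t)=x\Big\},
\]
where, for each such $\gamma$, $w_\gamma$ is the unique solution of $\dot w_\gamma(\tau)=L\big(\gamma(\tau),\dot\gamma(\tau),w_\gamma(\tau)\big)$ with $w_\gamma(0)=u_0$; along a curve attaining the infimum this $w_\gamma$ coincides with $\tau\mapsto h_{x_0,u_0}(\gamma(\tau),\tau)$, as one reads off the last assertion of Proposition~\ref{prop4} via Legendre duality. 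I would establish (3) first, then (1), then (2).

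For (3): let $u_1<u_2$ and let $\gamma$ attain $h_{x_0,u_2}(x,t)$; let $w_2$ solve $\dot w=L(\gamma,\dot\gamma,w)$ along $\gamma$ with $w_2(0)=u_2$, so $h_{x_0,u_2}(x,t)=w_2(t)$, and let $w_1$ solve the same equation with $w_1(0)=u_1$, so $w_1(t)\ge h_{x_0,u_1}(x,t)$. Condition \textbf{(L3)} gives $\frac{d}{d\tau}(w_2-w_1)\ge-\lambda(w_2-w_1)$ on the interval where $w_2>w_1$, which by $(w_2-w_1)(0)=u_2-u_1>0$ and the resulting exponential lower bound is all of $[0,t]$; hence $w_2(t)-w_1(t)\ge(u_2-u_1)e^{-\lambda t}>0$, and therefore $h_{x_0,u_1}(x,t)\le w_1(t)<w_2(t)=h_{x_0,u_2}(x,t)$. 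Running the same Grönwall comparison in both directions also yields $|h_{x_0,u_0}(x,t)-h_{x_0,u_0'}(x,t)|\le e^{\lambda t}|u_0-u_0'|$, i.e. Lipschitz continuity in the $u_0$-slot, which feeds into (2).

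For (1): given any competitor $(\gamma,w_\gamma)$ on $[0,t+s]$, set $y:=\gamma(t)$; the restriction to $[0,t]$ is admissible for $h_{x_0,u_0}(y,t)$, so $w_\gamma(t)\ge h_{x_0,u_0}(y,t)$, and the time-shift to $[0,s]$ is admissible for $h_{y,w_\gamma(t)}(x,s)$, so $w_\gamma(t+s)\ge h_{y,w_\gamma(t)}(x,s)\ge h_{y,h_{x_0,u_0}(y,t)}(x,s)$ by the monotonicity from (3); taking the infimum over competitors gives $h_{x_0,u_0}(x,t+s)\ge\inf_{y}h_{y,h_{x_0,u_0}(y,t)}(x,s)$. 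For the reverse inequality, fix $y$, concatenate near-minimizers of $h_{x_0,u_0}(y,t)$ and of $h_{y,h_{x_0,u_0}(y,t)}(x,s)$ into an admissible curve on $[0,t+s]$, and control the mismatch between $w_\gamma(t)$ and $h_{x_0,u_0}(y,t)$ by the Grönwall estimate above; letting the error tend to $0$ and minimizing over $y$ finishes it. The ``attained iff'' statement comes from tracking equality in this chain: if $\gamma$ attains $h_{x_0,u_0}(x,t+s)$ and $\gamma(t)=y$, the first chain forces $h_{y,h_{x_0,u_0}(y,t)}(x,s)=\inf_{y'}h_{y',h_{x_0,u_0}(y',t)}(x,s)$; conversely, if the infimum is attained at $y$, splicing actual minimizers (which exist by Proposition~\ref{prop4}) and invoking uniqueness of the ODE, so that the $w$-evolutions chain exactly, produces a minimizer of $h_{x_0,u_0}(x,t+s)$ passing through $y$ at time $t$.

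For (2): the $u_0$-estimate being in hand, it remains to prove joint Lipschitz continuity in $(x_0,x,t)$ on $M\times M\times[c,d]$ with $0<c<d$. The key ingredient is an a priori compactness statement: over such a compact range of data the minimizing curves of $h$ admit a uniform Lipschitz bound. This follows from superlinearity \textbf{(L2)} together with local boundedness of $h$ (the total action being bounded forces the average speed to be bounded), combined with the Tonelli/Mañé a priori compactness argument carried out in the contact setting in \cite{WWY17}. Granting a uniform speed bound, and noting that $|L|$ is then bounded on the relevant compact subset of $TM\times\R$, the remaining estimates are routine: displacing $x$ or $x_0$ by a small distance changes the value by $O(\mathrm{dist})$ after reconnecting a minimizer near the moved endpoint, and changing $t$ by $\Delta t$ changes it by $O(|\Delta t|)$ after a linear reparametrization. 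I expect this a priori compactness of minimizers to be the only genuinely nontrivial point: items (1) and (3) are bookkeeping on top of Proposition~\ref{prop4} plus the one-line Grönwall comparison.
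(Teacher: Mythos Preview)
The paper does not supply its own proof of this proposition: it is quoted verbatim as \cite[Theorem~C, Theorem~D]{WWY17} and \cite[Proposition~3.3]{WWY192}, with no argument given. Your proposal is therefore not to be compared against anything in the present paper, but it is a faithful reconstruction of the arguments in the cited sources: the Grönwall comparison along the Carathéodory ODE $\dot w=L(\gamma,\dot\gamma,w)$ is exactly how \cite{WWY17} proves monotonicity and the $u_0$-Lipschitz estimate, the Markov/dynamic-programming identity is proved there by the restriction/concatenation scheme you describe, and the joint Lipschitz continuity is obtained from the a priori compactness of minimizers that you correctly flag as the only substantive point.

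One minor simplification: in your proof of the inequality $h_{x_0,u_0}(x,t+s)\le\inf_y h_{y,h_{x_0,u_0}(y,t)}(x,s)$ you invoke near-minimizers and a Grönwall error term, but since Proposition~\ref{prop4} guarantees that actual minimizers exist, you can take $\gamma_1$ attaining $h_{x_0,u_0}(y,t)$ and $\gamma_2$ attaining $h_{y,h_{x_0,u_0}(y,t)}(x,s)$; the concatenation is an admissible Lipschitz curve and the $w$-evolutions match exactly at the junction, so no limiting argument is needed.
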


	\begin{itemize}
		\item[$\bullet$] \textbf{  The  semi-groups of operators }
	\end{itemize} 
	
	As a creative applications of the implicit action function, by using semi-groups of operators, in \cite{WWY192} the authors  provided   a representation formula for the solution semi-group of the evolutionary equation \eqref{CLL}.
	
	Let us recall the  semi-groups of operators introduced in \cite{WWY192}. Define a family of nonlinear operators $\{T^-_t\}_{t\geqslant 0}$ from $C(M,\mathbf{R})$ to itself as follows. For each $\varphi\in C(M,\mathbf{R})$, denote by $(x,t)\mapsto T^-_t\varphi(x)$ the unique continuous function on $ (x,t)\in M\times[0,+\infty)$ such that
	\begin{equation}\label{eq:semigroup}
		T^-_t\varphi(x)=\inf_{\gamma}\left\{\varphi(\gamma(0))+\int_0^tL(\gamma(\tau),\dot{\gamma}(\tau),T^-_\tau\varphi(\gamma(\tau)))d\tau\right\},
	\end{equation}
	where the infimum is taken among the absolutely continuous curves $\gamma:[0,t]\to M$ with $\gamma(t)=x$. The infimum in \eqref{eq:semigroup} can be achieved and the curve achieving the infimum is called a minimizer of $T^-_t\varphi(x)$. Minimizers are of class $C^1$. Let $\gamma$ be a minimizer and $x(s):=\gamma(s)$, $u(s):=T_s^-\varphi(x(s))$, $p(s):=\frac{\partial L}{\partial \dot{x}}(x(s),\dot{x}(s),u(s))$.
	Then $(x(s),p(s),u(s))$ satisfies equations \eqref{c} with $x(t)=x$. $\{T^-_t\}_{t\geqslant 0}$ is a semi-group of operators and the function $(x,t)\mapsto T^-_t\varphi(x)$ is a viscosity solution of \eqref{CLL} with initial condition $w(x,0)=\varphi(x)$. Thus, we call $\{T^-_t\}_{t\geqslant 0}$ the backward solution semi-group.

	Here are some properties of $T^{-}_{t}$ on $C(M\times[0,\infty),R)$.
	
 We have $T^{-}_{t+t'}=T^{-}_{t}\circ T^{-}_{t'} $ for each $t\in R ,t'\in R $; $T^{-}_{0}\varphi(x)=\varphi(x)$. For each $t>0$, the function $T^{-}_{t}\varphi$ is locally semi-concave function. The infimum in \eqref{eq:semigroup} can be achieved, that is, for each $\varphi\in C(M,R)$, each $x\in M$ and each $t>0$ we can find a absolutely  continuous curve $\gamma_{x,t}:[0,t]\to M$ with $\gamma_{x,t}(t)=x$  and
	\[T^-_t\varphi(x)=\varphi(\gamma_{x,t}(0))+\int_0^tL(\gamma_{x,t}(\tau),\dot{\gamma_{x,t}}(\tau),T^-_\tau\varphi(\gamma_{x,t}(\tau)))d\tau.\]

	\begin{proposition}(\cite[Theorem 1.1  ]{WWY192} Representation formula)\label{prop01}
		There exists a semi-group of operators $\{T^{-}_{t}\}_{t\geq 0}:C(M,R)\to C(M,R)$, such that for each $\varphi\in C(M,R)$, $T^{-}_{t}\varphi(x)$ is the unique viscosity solution of equation \eqref{CLL}. Furthermore, 
		\[T^{-}_{t}\varphi(x)=\inf_{y\in M}h_{y,
			\varphi(y)}(x,t),\quad \forall (x,t)\in M\times [0,+\infty),\]
		where $h$ is the implicit action function introduced in proposition \ref{prop4}.
	\end{proposition}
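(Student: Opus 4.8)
The plan is to take the claimed formula as the \emph{definition} of the operators and then verify, one at a time, that $\{T^-_t\}_{t\ge 0}$ is a semi-group of continuous operators whose orbits solve \eqref{CLL}, and that this solution is unique. So set $T^-_t\varphi(x):=\inf_{y\in M}h_{y,\varphi(y)}(x,t)$ for $t>0$ and $T^-_0\varphi:=\varphi$. Since $\varphi\in C(M,\R)$ and $M$ is compact, $\varphi(M)\subset[a,b]$ for some reals $a<b$, so the Lipschitz regularity of $(x_0,u_0,x,t)\mapsto h_{x_0,u_0}(x,t)$ on $M\times[a,b]\times M\times[c,d]$ (Proposition \ref{prop3}(2)) makes the family $\{x\mapsto h_{y,\varphi(y)}(x,t):y\in M\}$ equi-Lipschitz with an infimum that is attained by compactness of $M$; hence $T^-_t\varphi$ is Lipschitz, and joint continuity in $(x,t)$ on $M\times(0,\infty)$ follows from the continuity statement in Proposition \ref{prop4} and the same equi-continuity argument, while $T^-_t\varphi\to\varphi$ uniformly as $t\to0^+$ follows by bounding $h_{y,\varphi(y)}(x,t)$ for small $t$ with the superlinearity (L2) (which forces the infimum to be realised near $y=x$).

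For the semi-group law I would use the Chapman--Kolmogorov-type identity of Proposition \ref{prop3}(1):
\[
T^-_{t+s}\varphi(x)=\inf_{y\in M}h_{y,\varphi(y)}(x,t+s)=\inf_{y\in M}\inf_{z\in M}h_{z,\,h_{y,\varphi(y)}(z,t)}(x,s),
\]
whereas $T^-_s\big(T^-_t\varphi\big)(x)=\inf_{z\in M}h_{z,\,T^-_t\varphi(z)}(x,s)=\inf_{z\in M}h_{z,\,\inf_{y\in M}h_{y,\varphi(y)}(z,t)}(x,s)$. These two expressions agree once the infimum over $y$ is pushed through the lower $u_0$-slot of $h_{z,\cdot}(x,s)$, which is legitimate because $u_0\mapsto h_{x_0,u_0}(x,t)$ is (strictly) increasing (Proposition \ref{prop3}(3)) and continuous and all infima are attained: monotonicity gives ``$\le$'' at once, and monotonicity together with attainment gives the reverse. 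With the semi-group property and continuity in hand, the dynamic-programming structure built into \eqref{2-1} transfers to $T^-_t$, and one runs the standard argument to show $u(x,t):=T^-_t\varphi(x)$ is a viscosity solution of \eqref{CLL}: given a smooth $\psi$ touching $u$ from above (resp.\ below) at $(x_0,t_0)$ with $t_0>0$, one inserts a short curve through $x_0$ into the variational inequality supplied by \eqref{2-1} and the semi-group law, divides by the time increment and lets it tend to $0$, obtaining $\partial_t\psi+H(x_0,D_x\psi,\psi)\le0$ (resp.\ $\ge0$) there; (H1)--(H3) and the corresponding (L1)--(L3) provide the regularity needed to pass to the limit. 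This also identifies $T^-_t$ with the operator singled out by \eqref{eq:semigroup}.

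It remains to prove uniqueness, i.e.\ a comparison principle for \eqref{CLL}. If $u_1,u_2\in C(M\times[0,T))$ are a viscosity sub- and super-solution with $u_1(\cdot,0)\le u_2(\cdot,0)$, the usual doubling-of-variables argument --- maximising $u_1(x,t)-u_2(y,t)-\tfrac{1}{2\alpha}d(x,y)^2-\eta/(T-t)$ over $M\times M\times[0,T)$ --- at an interior maximiser yields, after subtracting the two viscosity inequalities, a bound in which every term is controlled by the penalisation except the one coming from the $u$-dependence of $H$, and that term is $\le\lambda\,(u_1-u_2)^+$ by the Lipschitz condition (H3); letting $\alpha,\eta\to0$ and invoking Gr\"onwall's inequality gives $\sup_{M\times[0,T)}(u_1-u_2)\le0$, and applying this with the roles of $u_1,u_2$ reversed proves uniqueness. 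The step I expect to be most delicate is precisely the interplay between the implicit, fixed-point nature of $h_{x_0,u_0}$ --- which rules out a verbatim use of the classical Lax--Oleinik computations in the monotonicity, semi-group and viscosity steps --- and the verification that $T^-_t\varphi$ solves \eqref{CLL}; throughout, it is the uniform Lipschitz bound (H3)/(L3) in the contact variable that keeps these arguments, and the comparison argument, under control.
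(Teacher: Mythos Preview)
The paper does not prove this proposition; it is simply quoted from \cite[Theorem 1.1]{WWY192} as background material, with no argument supplied. So there is no ``paper's own proof'' to compare against.

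That said, your sketch is a faithful outline of how the result is actually established in \cite{WWY192}: one defines $T^-_t\varphi$ by the formula, uses the Lipschitz/monotonicity/Markov properties of the implicit action function (your Proposition~\ref{prop3}) to get continuity and the semi-group law, verifies the viscosity property from the dynamic-programming structure encoded in \eqref{2-1}, and closes with a comparison principle exploiting (H3). Your semi-group computation --- interchanging $\inf_y$ with the monotone slot $u_0\mapsto h_{z,u_0}(x,s)$ via attainment of the infimum --- is correct. The places that are genuinely delicate and only sketched in your write-up are (i) the short-time behaviour $T^-_t\varphi\to\varphi$ as $t\to0^+$, which in the contact setting requires more than a one-line appeal to (L2) because $h$ is defined implicitly (in \cite{WWY192} this uses a priori estimates on $h_{x_0,u_0}(x,t)$ for small $t$), and (ii) the viscosity verification, where passing to the limit needs the $C^1$ regularity of minimizers from Proposition~\ref{prop4} rather than just (L1)--(L3). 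These are not gaps so much as points where the full argument in \cite{WWY192} is substantially longer than your summary suggests.
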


	\begin{proposition}(\cite[Proposition 4.2, 4.3, Corollary 4.1]{WWY192})\label{prop01}
		Let $\varphi$, $\psi\in C(M,\R)$.

		\item [(1)] If $\psi\leqslant\varphi$, then $T^{-}_t\psi\leqslant T^{-}_t\varphi$,$\forall t\geqslant 0$.
		\item [(2)] The function $(x,t)\to T^{-}_t\varphi(x)$ is locally Lipschitz on $M\times(0,+\infty)$.
		\item [(3)] $\|T^{-}_t\varphi-T^{-}_t\psi\|_{\infty}\leqslant e^{\lambda t}\cdot\|\varphi-\psi\|_{\infty},\ \forall t\geqslant0$.

		\item [(4)]  $\{T^{-}_t\}_{t\geqslant0}$ are one-parameter semi-groups of operators. For all $x_0$,  $x\in M$, all $u_0\in \R$ and all $s$, $t>0$,
		
		$T^-_sh_{x_0,u_0}(x,t)=h_{x_0,u_0}(x,t+s)$,\quad  $T^-_{t+s}\varphi(x)=\inf_{y\in M}h_{y,T^-_s\varphi(y)}(x,t)$;

	\end{proposition}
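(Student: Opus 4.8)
The whole proposition reduces assertions about the semigroup $T^-_t$ to properties of the implicit action function, using two tools already in hand: the representation formula $T^-_t\varphi(x)=\inf_{y\in M}h_{y,\varphi(y)}(x,t)$, and the three facts about $h$ recorded in Propositions \ref{prop4} and \ref{prop3} (the concatenation identity, joint local Lipschitz continuity, and strict monotonicity in the initial value $u_0$). For (1), since $\psi\leqslant\varphi$ pointwise, monotonicity of $h$ in its initial value (Proposition \ref{prop3}(3)) gives $h_{y,\psi(y)}(x,t)\leqslant h_{y,\varphi(y)}(x,t)$ for every $y$, and taking the infimum over $y\in M$ in the representation formula yields $T^-_t\psi\leqslant T^-_t\varphi$. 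For (2), by Proposition \ref{prop3}(2) the map $(y,u_0,x,t)\mapsto h_{y,u_0}(x,t)$ is Lipschitz on $M\times[a,b]\times M\times[c,d]$; as $M$ is compact and $\varphi$ bounded, the initial values $u_0=\varphi(y)$ lie in a fixed compact interval, so the family $\{(x,t)\mapsto h_{y,\varphi(y)}(x,t)\}_{y\in M}$ is equi-Lipschitz on $M\times[c,d]$ for each $0<c<d$. Since a pointwise infimum of functions sharing a common Lipschitz constant is again Lipschitz with that constant, $(x,t)\mapsto T^-_t\varphi(x)$ is locally Lipschitz on $M\times(0,+\infty)$.

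The estimate (3) is where the uniform Lipschitz hypothesis (L3) is used. The core is the one-variable bound
\[
|h_{x_0,u_1}(x,t)-h_{x_0,u_2}(x,t)|\leqslant e^{\lambda t}\,|u_1-u_2|,\qquad u_1<u_2.
\]
To obtain it I would compare $h^1:=h_{x_0,u_1}$ and $h^2:=h_{x_0,u_2}$ along a minimizing curve $\gamma$ for $h^1(x,t)$: relation \eqref{2-1} holds with equality along $\gamma$ for $h^1$ and as a competitor inequality for $h^2$, so subtracting and bounding the Lagrangian difference by $|L(\gamma,\dot\gamma,h^2)-L(\gamma,\dot\gamma,h^1)|\leqslant\lambda|h^2-h^1|$ via (L3) produces, for $g(\tau):=h^2(\gamma(\tau),\tau)-h^1(\gamma(\tau),\tau)\geqslant0$, the integral inequality $g(t')\leqslant(u_2-u_1)+\lambda\int_0^{t'}g(\tau)\,d\tau$. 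Gronwall's inequality then gives $0\leqslant h^2(x,t)-h^1(x,t)\leqslant e^{\lambda t}(u_2-u_1)$. Inserting this into the representation formula, the pointwise bound $h_{y,\varphi(y)}(x,t)\leqslant h_{y,\psi(y)}(x,t)+e^{\lambda t}\|\varphi-\psi\|_\infty$ survives the infimum over $y$, and the symmetric inequality finishes (3).

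For (4), the first identity is the concatenation property in semigroup dress: applying the representation formula to the initial function $x\mapsto h_{x_0,u_0}(x,t)$ gives $T^-_s\big(h_{x_0,u_0}(\cdot,t)\big)(x)=\inf_{y}h_{y,\,h_{x_0,u_0}(y,t)}(x,s)$, which is precisely the right-hand side of Proposition \ref{prop3}(1) and therefore equals $h_{x_0,u_0}(x,t+s)$.

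The semigroup law and the second identity come out together. Starting from $T^-_{t+s}\varphi(x)=\inf_y h_{y,\varphi(y)}(x,t+s)$ and expanding the inner action by Proposition \ref{prop3}(1) gives the double infimum $\inf_z\inf_y h_{z,\,h_{y,\varphi(y)}(z,s)}(x,t)$. The decisive step, which I expect to be the main obstacle, is to move the infimum over $y$ through the outer action function, i.e. to justify
\[
\inf_{y}h_{z,\,h_{y,\varphi(y)}(z,s)}(x,t)=h_{z,\,\inf_{y}h_{y,\varphi(y)}(z,s)}(x,t)=h_{z,\,T^-_s\varphi(z)}(x,t).
\]
This uses monotonicity and continuity of $u_0\mapsto h_{z,u_0}(x,t)$ (Proposition \ref{prop3}(2)--(3)) together with compactness of $M$ to guarantee that the inner infimum is attained; granting it, taking the infimum over $z$ gives $T^-_{t+s}\varphi(x)=\inf_z h_{z,\,T^-_s\varphi(z)}(x,t)=(T^-_t\circ T^-_s)\varphi(x)$, which is simultaneously the semigroup property and the second identity of (4).
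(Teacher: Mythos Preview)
The paper does not give its own proof of this proposition: it is quoted verbatim from \cite{WWY192} (Propositions~4.2--4.3 and Corollary~4.1 there) and used as a black box. So there is nothing in the present paper to compare your argument against.

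That said, your argument is correct and is essentially the standard route taken in \cite{WWY192}. A few comments. In part~(3), the one place that deserves a word of justification is the claim that along a minimizer $\gamma$ of $h^1(x,t)$ one has the \emph{equality} $h^1(\gamma(t'),t')=u_1+\int_0^{t'}L(\gamma,\dot\gamma,h^1(\gamma,\cdot))\,d\tau$ for every $t'\in[0,t]$, not just at $t'=t$; this follows from Proposition~\ref{prop4}, since along a minimizer the function $u(s):=h_{x_0,u_1}(\gamma(s),s)$ solves the $\dot u$ equation of \eqref{c} and hence $\dot u(s)=L(\gamma(s),\dot\gamma(s),u(s))$. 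With that in hand your Gronwall step is clean. In part~(4), your identification of the ``decisive step'' is exactly right, and your justification (attainment of the inner infimum by compactness and continuity, then monotonicity of $u_0\mapsto h_{z,u_0}(x,t)$ to pass the infimum inside) is the correct one; no further subtlety is hidden there.
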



	\begin{itemize}
		\item[$\bullet$]
		\textbf{ Backward Weak KAM solutions of equation \eqref{0.2} }
	\end{itemize}

	\begin{definition}
		\label{bwkam}
		A function $u\in C(M,\R)$ is called a backward weak KAM solution of \eqref{0.2}, if
		\begin{itemize}
			\item [(1)] for each continuous piecewise $C^1$ curve $\gamma:[t_1,t_2]\rightarrow M$, we have
			\begin{align}\label{do}
				u(\gamma(t_2))-u(\gamma(t_1))\leqslant\int_{t_1}^{t_2}L(\gamma(s),\dot{\gamma}(s),u(\gamma(s)))ds;
			\end{align}
			\item [(2)] for each $x\in M$, there exists a $C^1$ curve $\gamma:(-\infty,0]\rightarrow M$ with $\gamma(0)=x$ such that
			\begin{align}\label{cali1}
				u(x)-u(\gamma(t))=\int^{0}_{t}L(\gamma(s),\dot{\gamma}(s),u(\gamma(s)))ds, \quad \forall t<0.
			\end{align}
		\end{itemize}
		We say that $u$ in \eqref{do} is  dominated by $L$, denoted by $u\prec L$. We call curves satisfying \eqref{cali1} are $(u,L,0)$-calibrated curves . We use $\mathcal{S}^-$  to denote the set of all backward  weak KAM solutions of \eqref{0.2}.
	\end{definition}
	\begin{remark}\label{10}
		If $u$ is dominated by $L$ and $\gamma:I\to M$	is $(u,L,0)$-calibrated curve, then, for every subinterval $I'\subset I$, the restriction $\gamma|_{I'}$ is also $(u,L,0)$-calibrated curve.
	\end{remark}
	\begin{remark}\label{ll}
		By the definition of dominated functions, it is not hard to show any dominated function  is Lipschitz continuous on $M$. See \cite[Lemma 4.1]{WWY19} for the proof. Hence, backward weak KAM solutions are Lebesgue almost everywhere differentiable.
	\end{remark}	
	
	\begin{proposition}(\cite[Lemma 4.3]{WWY19})\label{diffe}
		Given any $a>0$, let $u\prec L$ and let $\gamma:[-a,a]\rightarrow M$ be a $(u,L,0)$-calibrated curve. Then $u$ is differentiable at $\gamma(0)$.
	\end{proposition}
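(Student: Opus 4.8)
The plan is to show that the sub- and super-differentials of $u$ at $x_0:=\gamma(0)$ are both nonempty and, in fact, both contain the momentum $p_0:=\frac{\partial L}{\partial\dot x}\big(x_0,\dot\gamma(0),u(x_0)\big)$; the conclusion then follows from the elementary fact that a function whose subdifferential and superdifferential at a point are simultaneously nonempty is differentiable there. Since $a>0$, the point $x_0$ lies in the interior of the domain of $\gamma$, and by Remark \ref{10} both restrictions $\gamma|_{[0,a]}$ and $\gamma|_{[-a,0]}$ are again $(u,L,0)$-calibrated; as $\gamma$ is $C^1$, the velocity $\dot\gamma(0)$ is its well-defined two-sided value. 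Throughout I would work in a fixed coordinate chart around $x_0$, identifying a neighbourhood of $x_0$ with a ball in $\R^n$, so that affine perturbations of $\gamma$ make sense.

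First I would establish $p_0\in D^-u(x_0)$ using the forward calibrated arc $\gamma|_{[0,h]}$ for small $h>0$. For $v\in\R^n$ with $|v|$ small, set $\sigma_v(s):=\gamma(s)+(1-s/h)v$ on $[0,h]$, so $\sigma_v(0)=x_0+v$ and $\sigma_v(h)=\gamma(h)$. Applying domination $u\prec L$ to $\sigma_v$ and subtracting the calibration identity for $\gamma$ on $[0,h]$ yields
\[
u(x_0+v)-u(x_0)\ \geqslant\ -\int_0^h\Big(L\big(\sigma_v,\dot\sigma_v,u(\sigma_v)\big)-L\big(\gamma,\dot\gamma,u(\gamma)\big)\Big)\,ds.
\]
A second-order Taylor expansion of $L$ in $(x,\dot x)$ — valid with all arguments in a fixed compact set since $L\in C^2$ and $\dot\sigma_v-\dot\gamma=-v/h$ stays bounded once $h$ is coupled to $v$ — together with the uniform bound $(\mathbf{L3})$ to absorb the dependence on the contact value $u(\sigma_v)$, reduces the right-hand side to $\langle p_0,v\rangle$ plus errors of size $O(h|v|)+O(|v|^2/h)$. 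Choosing $h=\sqrt{|v|}$ makes these errors $O(|v|^{3/2})=o(|v|)$, so $u(x_0+v)\geqslant u(x_0)+\langle p_0,v\rangle+o(|v|)$, i.e.\ $p_0\in D^-u(x_0)$. A symmetric argument on the backward arc $\gamma|_{[-h,0]}$, using $\tau_v(s):=\gamma(s)+(1+s/h)v$ on $[-h,0]$ (so $\tau_v(-h)=\gamma(-h)$, $\tau_v(0)=x_0+v$), gives the reversed estimate $u(x_0+v)\leqslant u(x_0)+\langle p_0,v\rangle+o(|v|)$, that is $p_0\in D^+u(x_0)$; the $C^1$ regularity of $\gamma$ guarantees that the same $p_0$ appears on both sides. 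Hence $u$ is differentiable at $x_0$ with $D_xu(x_0)=p_0$. (Alternatively, the superdifferential may be obtained at no cost from the fact, recalled among the properties of $T^-_t$ in Section 2, that $u=T^-_tu$ is locally semiconcave for every $t>0$, so that only the forward estimate is genuinely needed.)

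The main obstacle is the forward estimate just described: unlike in the classical, contact-free setting, the Lagrangian in the calibration and domination inequalities is evaluated at the a priori only Lipschitz function $u$ along the perturbed curve, so the action functional cannot be differentiated in $v$ directly. The resolution is quantitative — bound the contact contribution by $\lambda\,\mathrm{Lip}(u)\,|v|h$ via $(\mathbf{L3})$, control the quadratic Taylor remainder of $L$ by $O(|v|^2/h)$, and balance both against the linear term through the coupling $h\asymp\sqrt{|v|}$. All remaining ingredients (the chart reduction, the calibration of subarcs via Remark \ref{10}, the $C^1$-regularity of calibrated curves, and the sub/super-differential criterion for differentiability) are routine.
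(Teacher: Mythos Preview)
The paper does not prove this proposition; it is quoted verbatim from \cite[Lemma 4.3]{WWY19} and used as a black box. Your argument is the standard Fathi-type proof (perturb the calibrated curve affinely, compare domination with calibration, balance the Taylor errors by coupling $h\asymp\sqrt{|v|}$), correctly adapted to the contact setting via $(\mathbf{L3})$ and the Lipschitz bound on $u$ from Remark~\ref{ll}; this is essentially how the cited reference proceeds as well, so there is no genuine methodological difference to report.

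One small caveat: your parenthetical alternative at the end --- that the superdifferential comes for free from the semiconcavity of $u=T^-_tu$ --- tacitly assumes $u\in\mathcal{S}^-$, whereas the proposition is stated for an arbitrary dominated function $u\prec L$. In that generality the semiconcavity shortcut is not available, so the two-sided perturbation argument you give first is the one that actually carries the proof.
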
	
	
	The proposition below points out that backward weak KAM solutions are just fixed points of solution semi-groups $\{T^{-}_{t}\}_{t\geqslant 0}$, and that viscosity solutions are the same as backward weak KAM solutions in the setting of this paper.
	
	\begin{proposition}    \cite[Proposition 2.7]{WWY19} \label{prop9}
		The backward weak KAM solutions of \eqref{0.2} are the same as the viscosity solutions of \eqref{0.2}. Moreover,
		$u_-\in \mathcal{S}^-$ if and only if $T^-_tu_-=u_-$ for all $t\geqslant0$.
	\end{proposition}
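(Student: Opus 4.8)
The plan is to prove, besides the stated fixed-point characterisation, the equivalence that $u_-$ is a viscosity solution of \eqref{0.2} if and only if $T^-_tu_-=u_-$ for all $t\geqslant 0$; chaining the two equivalences identifies viscosity solutions of \eqref{0.2} with backward weak KAM solutions, which gives both assertions. The viscosity/fixed-point equivalence is essentially formal: since $(t,x)\mapsto T^-_t\varphi(x)$ is the \emph{unique} viscosity solution of the evolutionary equation \eqref{CLL} with initial datum $\varphi$, a viscosity solution $u_-$ of \eqref{0.2}, viewed as the $t$-independent function $(t,x)\mapsto u_-(x)$, is a viscosity solution of \eqref{CLL} with datum $u_-$ and hence equals $T^-_tu_-$; conversely, if $T^-_tu_-=u_-$ for all $t$, then $(t,x)\mapsto u_-(x)$ is a $t$-independent viscosity solution of \eqref{CLL}, and testing against $t$-independent test functions shows that $u_-$ solves the stationary equation in the viscosity sense.

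For the implication ``$T^-_tu_-=u_-$ for all $t$ $\Rightarrow$ $u_-\in\mathcal{S}^-$'': domination is immediate, since for a continuous piecewise-$C^1$ curve $\gamma\colon[t_1,t_2]\to M$ one applies the defining infimum \eqref{eq:semigroup} for $T^-_{t_2-t_1}u_-$ at $\gamma(t_2)$ along $\gamma$ itself and substitutes $T^-_\tau u_-=u_-$ in the integrand, obtaining $u_-(\gamma(t_2))-u_-(\gamma(t_1))\leqslant\int_{t_1}^{t_2}L(\gamma,\dot\gamma,u_-(\gamma))\,ds$, i.e.\ $u_-\prec L$. For calibrated curves, fix $x\in M$; for each $n$ take a minimiser of $T^-_nu_-(x)=u_-(x)$, reparametrised as $\sigma_n\colon[-n,0]\to M$ with $\sigma_n(0)=x$, and use the fixed-point property in the integrand together with domination on sub-intervals to check that $u_-(x)=u_-(\sigma_n(-m))+\int_{-m}^{0}L(\sigma_n,\dot\sigma_n,u_-(\sigma_n))\,ds$ for every $m\leqslant n$. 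Superlinearity \textbf{(L2)} and boundedness of $u_-$ on the compact $M$ give the standard a priori estimate making $\{\sigma_n\}$ equi-Lipschitz on each $[-m,0]$, so Arzel\`a--Ascoli and a diagonal argument furnish a limit curve $\sigma\colon(-\infty,0]\to M$ with $\sigma(0)=x$ for which that identity, and hence \eqref{cali1}, passes to the limit; thus $u_-\in\mathcal{S}^-$.

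The main obstacle is the converse, ``$u_-\in\mathcal{S}^-$ $\Rightarrow$ $T^-_tu_-=u_-$'', where the $u$-dependence of $L$ defeats the classical argument. I would split it into $u_-\leqslant T^-_tu_-$ and $T^-_tu_-\leqslant u_-$, using throughout the representation formula $T^-_tu_-(x)=\inf_y h_{y,u_-(y)}(x,t)$ and the $C^1$-regularity of the minimisers of the implicit action function (Proposition \ref{prop4}). For $u_-\leqslant T^-_tu_-$: along the minimiser $\gamma_1$ of $h_{y,u_-(y)}(x,t)$ set $G(\tau)=h_{y,u_-(y)}(\gamma_1(\tau),\tau)$ and $w(\tau)=u_-(\gamma_1(\tau))$; from the Markov property of $h$ (Proposition \ref{prop3}(1)) one gets $G'=L(\gamma_1,\dot\gamma_1,G)$ with $G(0^+)=u_-(y)$, while $u_-\prec L$ gives $w'\leqslant L(\gamma_1,\dot\gamma_1,w)$ a.e.\ with $w(0)=u_-(y)$, so the bound $|\partial_uL|\leqslant\lambda$ from \textbf{(L3)} lets a Gronwall comparison give $w\leqslant G$, i.e.\ $u_-(x)\leqslant h_{y,u_-(y)}(x,t)$; taking $\inf_y$ yields $u_-\leqslant T^-_tu_-$. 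For $T^-_tu_-\leqslant u_-$: pick a $(u_-,L,0)$-calibrated curve $\sigma$ through $x$ and reparametrise $\sigma|_{[-t,0]}$ as $\eta\colon[0,t]\to M$, $\eta(0)=y$, $\eta(t)=x$; calibration gives $w(\tau):=u_-(\eta(\tau))=u_-(y)+\int_0^\tau L(\eta,\dot\eta,w)\,ds$, while \eqref{2-1} applied to $\eta|_{[0,\tau]}$ gives $G(\tau):=h_{y,u_-(y)}(\eta(\tau),\tau)\leqslant u_-(y)+\int_0^\tau L(\eta,\dot\eta,G)\,ds$ with $G(0^+)=u_-(y)$. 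The crucial observation is that the inequality already proved, $w(\tau)=u_-(\eta(\tau))\leqslant T^-_\tau u_-(\eta(\tau))\leqslant h_{y,u_-(y)}(\eta(\tau),\tau)=G(\tau)$, forces $\rho:=G-w\geqslant0$; hence $\rho(0)=0$ and $\rho(\tau)\leqslant\lambda\int_0^\tau\rho\,ds$, so Gronwall gives $\rho\equiv0$, whence $h_{y,u_-(y)}(x,t)=u_-(x)$ and $T^-_tu_-(x)\leqslant u_-(x)$. Combining the two inequalities gives $T^-_tu_-=u_-$. Besides the a priori compactness of minimisers, the one genuinely delicate point is this sign-pinning bootstrap, which is exactly what rescues the Gronwall step in the absence of monotonicity of $L$ in $u$.
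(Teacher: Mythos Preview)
The paper does not prove this proposition at all: it is quoted verbatim from \cite[Proposition 2.7]{WWY19} and used as a black box, so there is no ``paper's own proof'' to compare against. What you have written is a self-contained reconstruction of that external result.

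Your argument is essentially correct and follows the standard route in the contact setting. A few remarks. The equivalence ``viscosity solution of \eqref{0.2} $\Leftrightarrow$ fixed point of $T^-_t$'' via uniqueness of the evolutionary problem \eqref{CLL} is exactly how this is done. For ``fixed point $\Rightarrow$ weak KAM'', your domination and Arzel\`a--Ascoli compactness of minimisers are the right tools; you should say explicitly why the a priori velocity bound is uniform in $n$ (superlinearity of $L$ together with the bound on $u_-$ on the compact $M$ and the Lipschitz dependence \textbf{(L3)}), but this is routine. For ``weak KAM $\Rightarrow$ fixed point'', your two-inequality/Gronwall scheme is the correct substitute for monotonicity, and the bootstrap --- using the already-established inequality $u_-\leqslant T^-_\tau u_-$ to pin the sign of $\rho=G-w$ before applying Gronwall --- is precisely the point that makes the contact case work. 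One small technicality you gloss over: the limit $G(0^+)=u_-(y)$ when $\eta$ is merely a test curve (not a minimiser of $h$) does not follow directly from Proposition~\ref{prop4}; you should justify it either from the sandwich $w(\tau)\leqslant G(\tau)\leqslant u_-(y)+\int_0^\tau L(\eta,\dot\eta,G)\,ds$ together with a short-time bound on $G$, or from the known small-time behaviour of $h_{x_0,u_0}(x,t)$ along Lipschitz curves with $\eta(0)=y$. This is a minor gap and is easily filled.
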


	\begin{proposition}(\cite[Proposition 4.1]{WWY19})\label{inva}
		Let $u_-\in \mathcal{S}_-$.
		Let $\gamma:[a,b]\rightarrow M$ be a  $(u_-,L,0)$-calibrated curve. Then $u_-$ is differentiable at $\g(s)$ for all $s\in(a,b)$ and 	
		\[
		\Big(\gamma(s),\frac{\partial L}{\partial \dot{x}}(\g(s),\dot{\g}(s),u_-(\g(s))),u_-(\gamma(s))\Big)
		\]
		is a solution of \eqref{c}.
		Moreover,
		\[
		\big(\gamma(t+s),D_{x}u_-(\gamma(t+s)),u_-(\gamma(t+s))\big)=\Phi^H_{s}\big(\gamma(t),D_{x}u_-(\gamma(t)),u_-(\gamma(t))\big), \quad\forall t, \ t+s\in[a,b],
		\]
		and
		\[
		H\big(\gamma(s),D_{x}u_-(\g(s)),u_-(\gamma(s))\big)=0,\quad D_{x}u_-(\g(s))=\frac{\partial L}{\partial \dot{x}}\big(\gamma(s),\dot{\gamma}(s),u_-(\gamma(s))\big).
		\]
	\end{proposition}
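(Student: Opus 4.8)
The plan is to combine the differentiability result of Proposition~\ref{diffe} with the variational characterization of the backward solution semigroup $\{T^-_t\}_{t\ge0}$. First, fix $s_0\in(a,b)$ and choose $r>0$ with $[s_0-r,s_0+r]\subset[a,b]$. By Remark~\ref{10} the restriction $\gamma|_{[s_0-r,s_0+r]}$ is again $(u_-,L,0)$-calibrated; since $L$ is autonomous, the reparametrisation $\tau\mapsto\gamma(s_0+\tau)$ is a $(u_-,L,0)$-calibrated curve on $[-r,r]$, and $u_-\prec L$ because $u_-\in\mathcal{S}^-$. Hence Proposition~\ref{diffe} (applied with $a=r$) shows that $u_-$ is differentiable at $\gamma(s_0)$. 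As $s_0$ is arbitrary, $u_-$ is differentiable at $\gamma(s)$ for every $s\in(a,b)$.

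Next I would show that, for $a\le t_1<t_2\le b$, the reparametrised arc $\eta(\tau):=\gamma(t_1+\tau)$, $\tau\in[0,t_2-t_1]$, is a minimizer of $T^-_{t_2-t_1}u_-(\gamma(t_2))$. By Proposition~\ref{prop9} we have $T^-_\tau u_-=u_-$ for all $\tau\ge0$, so along $\eta$ the Lagrangian argument $T^-_\tau u_-(\eta(\tau))$ equals $u_-(\gamma(t_1+\tau))$, and the action of $\eta$ computes, using $(u_-,L,0)$-calibration, to
\[
u_-(\gamma(t_1))+\int_{t_1}^{t_2}L\big(\gamma(s),\dot\gamma(s),u_-(\gamma(s))\big)\,ds=u_-(\gamma(t_2))=T^-_{t_2-t_1}u_-(\gamma(t_2)),
\]
which is the infimum in \eqref{eq:semigroup}; hence $\eta$ is a minimizer. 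By the property of minimizers of the backward semigroup recalled above (each such minimizer lifts, through $p(s)=\frac{\partial L}{\partial\dot x}$ and $u(s)=T^-_s\varphi$, to a solution of \eqref{c}), with $\varphi=u_-$ so that $u(s)=u_-(\gamma(t_1+s))$, the triple $\big(\gamma(s),p(s),u_-(\gamma(s))\big)$ with $p(s):=\frac{\partial L}{\partial\dot x}(\gamma(s),\dot\gamma(s),u_-(\gamma(s)))$ solves \eqref{c} on $[t_1,t_2]$, hence on $(a,b)$ and, by continuity, on $[a,b]$.

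It remains to identify $p(s)$ with $D_xu_-(\gamma(s))$. This is the classical first-variation argument: joining $\gamma(s-\varepsilon)$ to a point $x$ near $\gamma(s)$ by a short curve and using domination $u_-\prec L$ together with calibration of $\gamma$ yields $u_-(x)-u_-(\gamma(s))\le\langle p(s),\exp_{\gamma(s)}^{-1}x\rangle+o(|x-\gamma(s)|)$, so $p(s)$ is a supergradient of $u_-$ at $\gamma(s)$; the symmetric construction joining $x$ to $\gamma(s+\varepsilon)$ shows $p(s)$ is a subgradient, so by the differentiability already proved $p(s)=D_xu_-(\gamma(s))$. Consequently $s\mapsto(\gamma(s),D_xu_-(\gamma(s)),u_-(\gamma(s)))$ is an integral curve of the contact vector field defining \eqref{c}; since $H\in C^3$, this vector field is $C^2$ and the ODE \eqref{c} has unique solutions, which gives the flow identity $(\gamma(t+s),D_xu_-(\gamma(t+s)),u_-(\gamma(t+s)))=\Phi^H_s(\gamma(t),D_xu_-(\gamma(t)),u_-(\gamma(t)))$ for all $t,t+s\in[a,b]$. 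Finally, at the differentiability point $\gamma(s)$ the gradient $D_xu_-(\gamma(s))$ lies in both the sub- and the superdifferential of the viscosity solution $u_-$ of \eqref{0.2} (Proposition~\ref{prop9}), so the subsolution test gives $H(\gamma(s),D_xu_-(\gamma(s)),u_-(\gamma(s)))\le0$ and the supersolution test gives $\ge0$, hence equality.

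The step I expect to be the main obstacle is the clean execution of the first-variation estimate yielding $p(s)=D_xu_-(\gamma(s))$ in this $u$-dependent Lagrangian setting: one must check that along the varied curves the dependence of $L$ on $u_-(\eta(\cdot))$ contributes only at higher order, which follows from the Lipschitz bound \textbf{(L3)} on $\partial L/\partial u$ together with the Lipschitz continuity of the implicit action in its data, and one must also verify that the minimizer property of $\gamma$ on subintervals is genuinely preserved by the implicit, $u$-dependent form of the action functional in \eqref{eq:semigroup}.
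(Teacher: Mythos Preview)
The paper does not prove this proposition: it is quoted as a preliminary result from \cite[Proposition~4.1]{WWY19} and used without argument, so there is no in-paper proof to compare against. Your outline is the standard weak KAM argument adapted to the contact setting and is essentially the route taken in \cite{WWY19}: differentiability from Proposition~\ref{diffe}, identification of calibrated subarcs with semigroup minimizers via $T^-_tu_-=u_-$, lifting minimizers to orbits of \eqref{c}, and the first-variation super/subgradient argument to pin down $p(s)=D_xu_-(\gamma(s))$.

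Your caveat at the end is well placed but not a genuine obstruction. The $u$-dependence in the first-variation step is handled exactly as you suggest: on a short interval $[s-\varepsilon,s]$ one replaces $u_-(\eta(\tau))$ by $u_-(\gamma(\tau))$ in the Lagrangian, and \textbf{(L3)} together with the Lipschitz continuity of $u_-$ (Remark~\ref{ll}) bounds the discrepancy by $\lambda\cdot\mathrm{Lip}(u_-)\cdot d(x,\gamma(s))\cdot\varepsilon$, which is $o(d(x,\gamma(s)))$ after dividing by $\varepsilon$ appropriately; alternatively one can bypass this entirely by invoking Proposition~\ref{prop4} to recognise the calibrated subarc as a minimizer of the implicit action $h_{\gamma(t_1),u_-(\gamma(t_1))}(\cdot,t_2-t_1)$, whose minimizers already lift to solutions of \eqref{c} with the Legendre momentum. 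One minor point: the identity $H=0$ along the curve also follows directly from the third equation of \eqref{c}, since $\dot u(s)=L(\gamma,\dot\gamma,u)=\langle p,\dot\gamma\rangle-H$ and $\dot u(s)=\langle D_xu_-,\dot\gamma\rangle=\langle p,\dot\gamma\rangle$, giving $H\equiv0$ without appealing to the viscosity tests.
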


	Suppose $u_{-}\in S^{-}$, recall that
	\[
	\Lambda_{u_-}:=\operatorname{cl}\Big(\big\{(x,p,u): x \ \text{is a differentiable point of } \ u_-, p=D_xu_-(x),u=u_-(x)\big\}\Big).
	\]

	\begin{proposition}\label{prop2.7}
		Suppose $u_-\in\mathcal{S}^-$. Then for any $\tilde{c}>0$ and $(x,p,u)\in\Lambda_{u_-}$, there exist a $(u_-,L,0)$-calibrated curve $\g:[-1,0]\to M$ and $t_0\in(-1,0)$ such that
		$$d((x,p,u),(\g(t_0),D_xu_-(\g(t_0)),u_-(\g(t_0))))<\tilde{c}.$$
	\end{proposition}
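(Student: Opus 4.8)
The plan is to prove the statement first for points of the dense subset $\big\{(x,D_xu_-(x),u_-(x)): x \text{ a differentiable point of } u_-\big\}$ whose closure is $\Lambda_{u_-}$, and then to pass to an arbitrary point of $\Lambda_{u_-}$ by a triangle‑inequality argument.

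For the base case, let $x$ be a differentiable point of $u_-$ and set $p:=D_xu_-(x)$, $u:=u_-(x)$. By Definition \ref{bwkam}(2) there is a $C^1$ curve $\gamma:(-\infty,0]\to M$ with $\gamma(0)=x$ that is $(u_-,L,0)$-calibrated, and by Remark \ref{10} its restriction $\gamma|_{[-2,0]}$ is again $(u_-,L,0)$-calibrated. Applying Proposition \ref{inva} on $[-2,0]$ with $t=0$ in its flow identity, and using that $\gamma(0)=x$ is a differentiable point so that $D_xu_-(\gamma(0))=p$ and $u_-(\gamma(0))=u$, I get
$$\big(\gamma(s),D_xu_-(\gamma(s)),u_-(\gamma(s))\big)=\Phi^H_s(x,p,u)\qquad\text{for }s\in(-2,0].$$
Since $\Phi^H$ is continuous and $\Phi^H_0(x,p,u)=(x,p,u)$, there is $\eta\in(0,1)$ such that $d\big(\Phi^H_s(x,p,u),(x,p,u)\big)<\tilde{c}$ for all $s\in(-\eta,0]$. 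Choosing any $t_0\in(-\eta,0)\subset(-1,0)$ and restricting $\gamma$ to $[-1,0]$ (still $(u_-,L,0)$-calibrated, by Remark \ref{10}), the displayed identity gives $d\big((x,p,u),(\gamma(t_0),D_xu_-(\gamma(t_0)),u_-(\gamma(t_0)))\big)<\tilde{c}$, as required. The argument plainly works with $\tilde{c}$ replaced by any positive tolerance.

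For a general $(x,p,u)\in\Lambda_{u_-}$, by definition of the closure I would pick a differentiable point $x'$ of $u_-$ with $d\big((x,p,u),(x',D_xu_-(x'),u_-(x'))\big)<\tilde{c}/2$, apply the base case to $(x',D_xu_-(x'),u_-(x'))$ with tolerance $\tilde{c}/2$ to obtain a $(u_-,L,0)$-calibrated curve $\gamma:[-1,0]\to M$ and $t_0\in(-1,0)$ with $d\big((x',D_xu_-(x'),u_-(x')),(\gamma(t_0),D_xu_-(\gamma(t_0)),u_-(\gamma(t_0)))\big)<\tilde{c}/2$, and conclude by the triangle inequality.

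The one place that needs care is the use of the flow identity of Proposition \ref{inva} at the endpoint $t=0$: this is legitimate precisely because in the base case $\gamma(0)=x$ is a differentiable point of $u_-$, so that $D_xu_-(\gamma(0))$ is defined and equals $p$. Apart from that, the proof is a routine combination of the existence of backward calibrated curves, their stability under restriction, the calibrated‑curve/Hamiltonian‑orbit correspondence, and continuity of the contact flow $\Phi^H$.
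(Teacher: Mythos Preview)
Your proof is correct and follows essentially the same route as the paper: first approximate $(x,p,u)\in\Lambda_{u_-}$ by a point $(x_0,D_xu_-(x_0),u_-(x_0))$ with $x_0$ a differentiable point of $u_-$, then run a backward calibrated curve from $x_0$ and pick $t_0\in(-1,0)$ close to $0$, and finish with the triangle inequality. The only difference is how the continuity at the endpoint is obtained: the paper argues directly that $D_xu_-(\gamma(t))\to D_xu_-(x_0)$ as $t\to 0^-$ via the semiconcavity of $u_-$, whereas you invoke the flow identity of Proposition~\ref{inva} together with the continuity of $\Phi^H$ (which is legitimate at $t=0$ precisely because $x_0$ is assumed differentiable). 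Both mechanisms yield the same conclusion.
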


	\begin{proof}
		From the definition of $\Lambda_{u_-}$, for any $\tilde{c}>0$, we can find a differentiable point $x_0$ of $u_-$ such that
		\begin{equation}\label{eq2.5}
			d((x,p,u),(x_0,D_xu_-(x_0),u_-(x_0)))<\frac{\tilde{c}}{2}.
		\end{equation}
		According to the definition of backward weak KAM solution, there is a $(u_-,L,0)$-calibrated curve $\g:[-1,0]\to M$ satisfying $\g(0)=x_0$. Using Proposition \ref{diffe}, we know that $u_-$ is differentiable at $\{\g(s):s\in(-1,0)\}$. So $u_-$ is differentiable at every point in $(-1,0]$.   Recall that $u_-$ is a semiconcave function, we can get $u_-(\g(s))\in C^1((-1,0],\R)$. Thus
		$$D_xu_-(x_0)=\lim_{t\to0^-}D_xu_-(\g(t)).$$
		Therefore, we can find a $t_0\in(-1,0)$ such that
		\begin{equation}\label{eq2.6}
			d((x_0,D_xu_-(x_0),u_-(x_0)),(\g(t_0),D_xu_-(\g(t_0)),u_-(\g(t_0))))<\frac{\tilde{c}}{2}.
		\end{equation}
		From \eqref{eq2.5} and \eqref{eq2.6}, we get
		\begin{equation*}
			\begin{split}
				&d((x,p,u),(\g(t_0),D_xu_-(\g(t_0)),u_-(\g(t_0))))\\
				\leqslant&d((x,p,u),(x_0,D_xu_-(x_0),u_-(x_0)))+\\
				&d((x_0,D_xu_-(x_0),u_-(x_0)),(\g(t_0),D_xu_-(\g(t_0)),u_-(\g(t_0))))\\
				<&\frac{\tilde{c}}{2}+\frac{\tilde{c}}{2}=\tilde{c}.
			\end{split}
		\end{equation*}
	\end{proof}
	
	\subsection{ Some fundamental results  for equation \eqref{eqth1} }

	In this section, we suppose the function $P\in C_0^3(T^*M\times\R,\R)$ and $|P(x,p,u)|\leqslant1$, $\forall (x,p,u)\in T^*M\times\R$. Denote
	$$H^{\varepsilon}(x,p,u)=H(x,p,u)+\varepsilon P(x,p,u),\quad \forall (x,p,u)\in T^*M\times\R.$$
	The Lagrangian  $L^{\varepsilon}(x,\dot{x},u)$ associated to $H^{\varepsilon}(x,p,u)$ is defined as
	$$L^{\varepsilon}(x,\dot{x},u):=\sup_{p\in T^*_xM}\{\langle \dot{x},p\rangle-H^{\varepsilon}(x,p,u)\},\quad \forall(x,\dot{x},u)\in TM\times\R,$$
	where $\langle \cdot,\cdot\rangle$ represents the canonical pairing between the tangent and cotangent space. Notice that $P\in C_{0}^{3}(T^{*}M\times R,R)$ and $H$ satisfies \textbf{(H1)-(H3)}, we can find a constant $\theta$ such that if $\varepsilon\in[0,\theta]$, then we can see the Hamiltonian $H^{\varepsilon}$ satisfies the conditions \textbf{(H1)-(H3)}  .
	And the Lagrangian $L^{\varepsilon}$ belongs to $C^3(T^*M\times\R,\R)$ and also satisfies \textbf{(L1)-(L3)}. Thus we can see that Proposition \ref{prop4}-\ref{prop2.7} are still true for the systems related to $H^{\varepsilon}$ and $L^{\varepsilon}$. So without any loss of generality, in the following contexts, we suppose $\varepsilon\in[0,\theta]$. Then  the implicit action function related to $L^{\varepsilon}$ is
	\begin{equation*}
		h^{\varepsilon}_{x_0,u_0}(x,t)=u_0+\inf_{\substack{\gamma(0)=x_{0} ,  \gamma(t)=x}}\int_0^tL^{\varepsilon}\big(\gamma(\tau),\dot{\gamma}(\tau),h^{\varepsilon}_{x_0,u_0}(\gamma(\tau),\tau)\big)d\tau,
	\end{equation*}
	and 	the  semi-group of operators  associated to equation  \eqref{CL} is  
	\begin{equation*}
		T^{\varepsilon}_t\varphi(x)=\inf_{\gamma}\left\{\varphi(\gamma(0))+\int_0^tL^{\varepsilon}\big(\gamma(\tau),\dot{\gamma}(\tau),T^{\varepsilon}_\tau\varphi(\gamma(\tau))\big)d\tau\right\}.
	\end{equation*}

	In the following part, we discuss some fundamental properties of Lagrangian $L^{\varepsilon}$ and semi-groups of operators    $T^{\varepsilon}_t$, which are crucial for the proofs of the main results.

	\begin{proposition}\label{prop1}
		For any $\varepsilon>0$, we have
		\begin{equation*}
			\big|L^{\varepsilon}(x,\dot{x},u)-L(x,\dot{x},u)\big|\leq\varepsilon, \quad \forall (x,\dot{x},u)\in TM \times\R.
		\end{equation*}
		Moreover, $\frac{\partial L^{\varepsilon}}{\partial\dot{x}}\to\frac{\partial L}{\partial\dot{x}}$ uniformly on $TM\times\R$ as $\varepsilon\to0$.
	\end{proposition}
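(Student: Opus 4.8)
The plan is to establish the two claims separately. The $C^{0}$-bound is the soft one: because $|P|\leqslant 1$ and $\varepsilon>0$, we have $-\varepsilon\leqslant-\varepsilon P(x,p,u)\leqslant\varepsilon$ for every $(x,p,u)$, so after adding $\langle\dot{x},p\rangle-H(x,p,u)$ and taking the supremum over $p\in T^{*}_{x}M$ (order-preserving, and commuting with the addition of a constant) one gets $L(x,\dot{x},u)-\varepsilon\leqslant L^{\varepsilon}(x,\dot{x},u)\leqslant L(x,\dot{x},u)+\varepsilon$, i.e. $|L^{\varepsilon}-L|\leqslant\varepsilon$ on all of $TM\times\R$. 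Only $|P|\leqslant1$ is used here.

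For the fibre derivatives I would argue through Legendre duality. For $\varepsilon\in[0,\theta]$, $H^{\varepsilon}$ satisfies \textbf{(H1)}--\textbf{(H2)}, so $p:=\frac{\partial L}{\partial\dot{x}}(x,\dot{x},u)$ is characterized by $\frac{\partial H}{\partial p}(x,p,u)=\dot{x}$ and $p_{\varepsilon}:=\frac{\partial L^{\varepsilon}}{\partial\dot{x}}(x,\dot{x},u)$ by $\frac{\partial H}{\partial p}(x,p_{\varepsilon},u)+\varepsilon\frac{\partial P}{\partial p}(x,p_{\varepsilon},u)=\dot{x}$. Subtracting gives $\frac{\partial H}{\partial p}(x,p,u)-\frac{\partial H}{\partial p}(x,p_{\varepsilon},u)=\varepsilon\frac{\partial P}{\partial p}(x,p_{\varepsilon},u)$, a vector of norm $\leqslant\varepsilon C_{1}$, where $C_{1}:=\sup_{T^{*}M\times\R}\bigl\|\frac{\partial P}{\partial p}\bigr\|<\infty$ since $P\in C_{0}^{3}$. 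I would then split according to whether $(x,p_{\varepsilon},u)$ meets $\supp P$. If $(x,p_{\varepsilon},u)\notin\supp P$, then $\frac{\partial P}{\partial p}(x,p_{\varepsilon},u)=0$, hence $\frac{\partial H}{\partial p}(x,p_{\varepsilon},u)=\dot{x}=\frac{\partial H}{\partial p}(x,p,u)$, and injectivity of $p\mapsto\frac{\partial H}{\partial p}(x,p,u)$ (a consequence of \textbf{(H1)}) forces $p_{\varepsilon}=p$. If $(x,p_{\varepsilon},u)\in\supp P$, then -- writing $\supp P\subset\{(x,p,u):\|p\|_{x}\leqslant R_{0},\ |u|\leqslant U_{0}\}$, which is allowed because $\supp P$ is compact -- we have $\|p_{\varepsilon}\|_{x}\leqslant R_{0}$ and $|u|\leqslant U_{0}$, so $\dot{x}=\frac{\partial H}{\partial p}(x,p_{\varepsilon},u)+\varepsilon\frac{\partial P}{\partial p}(x,p_{\varepsilon},u)$ has $\|\dot{x}\|_{x}\leqslant B$ for a constant $B$ (bound of $\frac{\partial H}{\partial p}$ over the compact ball-bundle $\{\|p\|_{x}\leqslant R_{0},|u|\leqslant U_{0}\}$, plus $C_{1}$, taking $\varepsilon\leqslant1$); then, since $\frac{\partial L}{\partial\dot{x}}$ is continuous on $TM\times\R$ and $\{\|\dot{x}\|_{x}\leqslant B,|u|\leqslant U_{0}\}$ is compact, $p=\frac{\partial L}{\partial\dot{x}}(x,\dot{x},u)$ satisfies $\|p\|_{x}\leqslant R_{2}$ for a constant $R_{2}$ independent of the point and of $\varepsilon\in[0,1]$.

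In this second case both $p$ and $p_{\varepsilon}$, over the same base point $(x,u)$, lie in the fixed compact set $\widetilde K:=\{(x,p,u):\|p\|_{x}\leqslant R,\ |u|\leqslant U_{0}\}$ with $R:=\max\{R_{0},R_{2}\}$, and so does the segment $t\mapsto p+t(p_{\varepsilon}-p)$. By \textbf{(H1)} and compactness there is $c_{0}>0$ with $\frac{\partial^{2}H}{\partial p^{2}}\geqslant c_{0}\,\mathrm{Id}$ on $\widetilde K$, whence integrating along that segment,
\[
c_{0}\|p_{\varepsilon}-p\|^{2}\ \leqslant\ \Bigl\langle\tfrac{\partial H}{\partial p}(x,p_{\varepsilon},u)-\tfrac{\partial H}{\partial p}(x,p,u),\,p_{\varepsilon}-p\Bigr\rangle\ \leqslant\ \varepsilon C_{1}\,\|p_{\varepsilon}-p\|,
\]
so $\|p_{\varepsilon}-p\|\leqslant(C_{1}/c_{0})\varepsilon$. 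This bound holds trivially in the first case too, so $\bigl\|\frac{\partial L^{\varepsilon}}{\partial\dot{x}}-\frac{\partial L}{\partial\dot{x}}\bigr\|_{\infty}\leqslant(C_{1}/c_{0})\varepsilon\to0$.

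The one genuine difficulty is the \emph{uniformity} over the noncompact $TM\times\R$: pointwise convergence of the Legendre transforms is automatic from smoothness, but uniform convergence needs control of where the perturbation acts, and this is exactly what the compact support of $P$ supplies -- off $\supp P$ the two Legendre transforms coincide identically, and on $\supp P$ the whole computation is confined to a fixed compact set on which \textbf{(H1)} yields a uniform strict-convexity constant. The only bookkeeping point is to choose $B$, $R_{2}$ (hence $c_{0}$) independently of $\varepsilon$, which is harmless since only small $\varepsilon$ is relevant and one may assume $\varepsilon\leqslant1$ throughout.
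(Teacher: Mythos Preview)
Your proof is correct, and both parts follow the same overall route as the paper: the $C^0$-bound comes from $|P|\leqslant 1$ via monotonicity of $\sup_p$, and the fibre-derivative convergence is obtained by writing $\frac{\partial L}{\partial\dot{x}}=\tilde p$, $\frac{\partial L^{\varepsilon}}{\partial\dot{x}}=\tilde p_{\varepsilon}$ through the inverse Legendre relations $\frac{\partial H}{\partial p}(x,\tilde p,u)=\dot{x}$, $\frac{\partial H^{\varepsilon}}{\partial p}(x,\tilde p_{\varepsilon},u)=\dot{x}$.

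Where the two arguments differ is in how the \emph{uniformity} of $\tilde p_{\varepsilon}\to\tilde p$ over the noncompact $TM\times\R$ is justified. The paper simply asserts that since $\frac{\partial H^{\varepsilon}}{\partial p}\rightrightarrows\frac{\partial H}{\partial p}$, the inverse maps $(f^{\varepsilon}_{x,u})^{-1}\to(f_{x,u})^{-1}$ converge; it does not explain why this convergence is uniform in $(x,\dot{x},u)$. Your argument is more careful and more quantitative: splitting on whether $(x,p_{\varepsilon},u)\in\supp P$, you show $p_{\varepsilon}=p$ off the support, while on the support the compactness of $\supp P$ confines both $p$ and $p_{\varepsilon}$ (and hence the connecting segment) to a fixed compact set $\widetilde K$, on which \textbf{(H1)} gives a uniform lower bound $c_0$ for $\frac{\partial^2 H}{\partial p^2}$ and hence the explicit estimate $\|p_{\varepsilon}-p\|\leqslant(C_1/c_0)\varepsilon$. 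This not only proves uniform convergence but gives a rate, and it makes transparent why the compact support of $P$ is essential for the claim on $TM\times\R$. The paper's version is shorter, yours is the complete one.
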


	\begin{proof}
		According to the definition of Legendre transform,
		\begin{equation}\label{eqLarag1}
			L(x,\dot{x},u):=\sup_{p\in T^*_xM}\{\langle \dot{x},p\rangle-H(x,p,u)\},\quad \forall(x,\dot{x},u)\in TM\times\R,
		\end{equation}
		and
		\begin{equation}\label{eqLarag2}
			L^{\varepsilon}(x,\dot{x},u):=\sup_{p\in T^*_xM}\{\langle \dot{x},p\rangle-H^{\varepsilon}(x,p,u)\},\quad \forall(x,\dot{x},u)\in TM\times\R.
		\end{equation}
		Thus for any $(x,\dot{x},u)\in TM\times\R$, we get
		\begin{equation*}
			\begin{split}
				&\Big|L^{\varepsilon}(x,\dot{x},u)-L(x,\dot{x},u)\Big|\\
				=&\Big|\sup_{p\in T^*_xM}\big\{\langle \dot{x},p\rangle-H^{\varepsilon}(x,p,u)\big\}-\sup_{p\in T^*_xM}\big\{\langle \dot{x},p\rangle-H(x,p,u)\big\}\Big|\\
				\leqslant&\sup_{p\in T^*_xM}\Big|\langle \dot{x},p\rangle-H^{\varepsilon}(x,p,u)-\langle \dot{x},p\rangle+H(x,p,u)\Big|\\
				=&\sup_{p\in T^*_xM}\varepsilon\Big|P(x,p,u)\Big|\leqslant\varepsilon.
			\end{split}
		\end{equation*}
		
		On the other hand, for any $(x,\dot{x},u)\in TM\times\R$, we suppose the supremum of \eqref{eqLarag1} and \eqref{eqLarag2} are obtained respectively at $\tilde{p}$ and $\tilde{p}_{\varepsilon}$. By analyzing, we can see that
		$$\frac{\partial H}{\partial p}(x,\tilde{p},u)=\dot{x}=\frac{\partial H^{\varepsilon}}{\partial p}(x,\tilde{p}_{\varepsilon},u).$$
		The maps $f_{x,u}: p\to\frac{\partial H}{\partial p}(x,p,u)$ and $f^{\varepsilon}_{x,u}: p\to\frac{\partial H^{\varepsilon}}{\partial p}(x,p,u)$ are diffeomorphisms from $T^*_xM$ to $T_xM$ since $H$ and $H^{\varepsilon}$ satisfy \textbf{(H1)-(H2)}. So we can view $\tilde{p}$, $\tilde{p}_{\varepsilon}$ as functions of $\dot{x}$: 
		$\tilde{p}=(f_{x,u})^{-1}(\dot{x})$ and $\tilde{p}_{\varepsilon}=(f^{\varepsilon}_{x,u})^{-1}(\dot{x})$. Since
		$$\frac{\partial H^{\varepsilon}}{\partial p}=\frac{\partial H}{\partial p}+\varepsilon\cdot\frac{\partial P}{\partial p}\rightrightarrows\frac{\partial H}{\partial p}\ on\ T^*M\times\R, \quad as\ \varepsilon\to0,$$
		we get $\tilde{p}_{\varepsilon}=(f^{\varepsilon}_{x,u})^{-1}(\dot{x})\to\tilde{p}=(f_{x,u})^{-1}(\dot{x})$ on $TM\times\R$ as $\varepsilon\to0$. 
		
		Take the function $F_{\varepsilon}(x,\dot{x},u):=L^{\varepsilon}(x,\dot{x},u)-L(x,\dot{x},u)$. Through calculation, we have
		\begin{equation*}
			\begin{split}
				&F_{\varepsilon}(x,\dot{x},u)=\langle \dot{x},\tilde{p}_{\varepsilon}\rangle-H^{\varepsilon}(x,\tilde{p}_{\varepsilon},u)-\langle \dot{x},\tilde{p}\rangle+H(x,\tilde{p},u)\\
				=&\langle \dot{x},(f^{\varepsilon}_{x,u})^{-1}(\dot{x})\rangle-H^{\varepsilon}(x,(f^{\varepsilon}_{x,u})^{-1}(\dot{x}),u)-\langle \dot{x},(f_{x,u})^{-1}(\dot{x})\rangle+H(x,(f_{x,u})^{-1}(\dot{x}),u),
			\end{split}
		\end{equation*}
		and
		$$\frac{\partial F_{\varepsilon}}{\partial\dot{x}}(x,\dot{x},u)=(f^{\varepsilon}_{x,u})^{-1}(\dot{x})-(f_{x,u})^{-1}(\dot{x})=\tilde{p}_{\varepsilon}-\tilde{p}.$$
		Therefore, we have $$\frac{\partial F_{\varepsilon}}{\partial\dot{x}}(x,\dot{x},u)\to0, \quad on\ TM\times\R,$$
		which means $\frac{\partial L^{\varepsilon}}{\partial\dot{x}}$ tends to $\frac{\partial L}{\partial\dot{x}}$ uniformly on $TM\times\R$ as $\varepsilon\to0$.
	\end{proof}

	\begin{proposition}\label{prop2}
		Let $\varphi(x)\in C(M,\R)$. For all $\varepsilon>0$, we have
		\begin{equation*}
			\Big|T^{\varepsilon}_{t}\varphi(x)-T^{-}_{t}\varphi(x)\Big|\leq \frac{\varepsilon}{\lambda}(e^{\lambda t}-1),\quad \forall t>0,\ \forall x\in M.
		\end{equation*}
	\end{proposition}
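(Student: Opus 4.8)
The plan is to compare the two semigroups through a Gronwall argument on the scalar function
\[
\alpha(t):=\|T^{\varepsilon}_{t}\varphi-T^{-}_{t}\varphi\|_{\infty},\qquad t\geqslant 0,
\]
which is continuous in $t$ (both $T^{-}_{t}\varphi(x)$ and $T^{\varepsilon}_{t}\varphi(x)$ are continuous on $M\times[0,+\infty)$ and $M$ is compact) and satisfies $\alpha(0)=0$. The two ingredients are Proposition \ref{prop1}, which gives the pointwise bound $|L^{\varepsilon}-L|\leqslant\varepsilon$ on $TM\times\R$, and the uniform Lipschitz bound \textbf{(L3)} for $L$ in the contact variable, with constant $\lambda$.

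Fix $t>0$ and $x\in M$. To bound $T^{\varepsilon}_{t}\varphi(x)-T^{-}_{t}\varphi(x)$, I would take a minimizer $\gamma$ of $T^{-}_{t}\varphi(x)$ (which exists because $L$ satisfies \textbf{(L1)-(L3)}), so that the defining identity for $T^{-}$ holds with equality along $\gamma$, while the identity for $T^{\varepsilon}$ evaluated at this same curve gives only the inequality $T^{\varepsilon}_{t}\varphi(x)\leqslant\varphi(\gamma(0))+\int_0^t L^{\varepsilon}(\gamma,\dot\gamma,T^{\varepsilon}_{\tau}\varphi(\gamma(\tau)))\,d\tau$. Subtracting and writing the integrand as $\bigl[L^{\varepsilon}(\gamma,\dot\gamma,T^{\varepsilon}_{\tau}\varphi)-L(\gamma,\dot\gamma,T^{\varepsilon}_{\tau}\varphi)\bigr]+\bigl[L(\gamma,\dot\gamma,T^{\varepsilon}_{\tau}\varphi)-L(\gamma,\dot\gamma,T^{-}_{\tau}\varphi)\bigr]$, I bound the first bracket by $\varepsilon$ (Proposition \ref{prop1}) and the second by $\lambda\,|T^{\varepsilon}_{\tau}\varphi(\gamma(\tau))-T^{-}_{\tau}\varphi(\gamma(\tau))|\leqslant\lambda\alpha(\tau)$ (by \textbf{(L3)} for $L$), obtaining $T^{\varepsilon}_{t}\varphi(x)-T^{-}_{t}\varphi(x)\leqslant\varepsilon t+\lambda\int_0^t\alpha(\tau)\,d\tau$. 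The opposite estimate is entirely symmetric, now taking a minimizer $\gamma$ of $T^{\varepsilon}_{t}\varphi(x)$ and using the identity for $T^{-}$ as an inequality along $\gamma$; note that the argument never needs \textbf{(L3)} for $L^{\varepsilon}$, which is precisely what preserves the exact constant $\lambda$. Taking the supremum over $x\in M$ yields
\[
\alpha(t)\leqslant\varepsilon t+\lambda\int_0^t\alpha(\tau)\,d\tau,\qquad t>0.
\]

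To finish I would apply Gronwall in integrating-factor form rather than the crude version: setting $A(t):=\int_0^t\alpha(\tau)\,d\tau$, the inequality reads $A'(t)-\lambda A(t)\leqslant\varepsilon t$, i.e. $\frac{d}{dt}\bigl(e^{-\lambda t}A(t)\bigr)\leqslant\varepsilon t\,e^{-\lambda t}$; integrating from $0$ and substituting back into $\alpha(t)\leqslant\varepsilon t+\lambda A(t)$ gives exactly $\alpha(t)\leqslant\frac{\varepsilon}{\lambda}(e^{\lambda t}-1)$, which is the asserted bound (indeed $\frac{\varepsilon}{\lambda}(e^{\lambda t}-1)$ solves the integral inequality with equality). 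There is no serious obstacle in this proposition; the only points requiring care are (i) using the semigroup identity correctly — as a fixed-point relation that bounds $T^{-}_{t}\varphi(x)$ or $T^{\varepsilon}_{t}\varphi(x)$ from above along any admissible curve, while being an equality only along a genuine minimizer of the corresponding semigroup — and (ii) using the integrating-factor Gronwall so as not to weaken the constant to $\varepsilon t\,e^{\lambda t}$.
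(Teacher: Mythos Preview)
Your proof is correct and follows essentially the same route as the paper's: use a minimizer of one semigroup as a test curve for the other, split the integrand via Proposition~\ref{prop1} and \textbf{(L3)} for $L$, then apply a sharp Gronwall to obtain $\frac{\varepsilon}{\lambda}(e^{\lambda t}-1)$. The only organizational difference is that the paper works pointwise along the minimizer with $F(s)=T^{\varepsilon}_s\varphi(\gamma(s))-T^{-}_s\varphi(\gamma(s))$ and introduces a last zero $s_0$ of $F$ to keep the sign fixed before Gronwall, whereas you pass immediately to the sup-norm $\alpha(t)$ and run the integrating-factor Gronwall globally from $0$; your version avoids the $s_0$ case analysis and is slightly cleaner, but the underlying argument is the same.
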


	\begin{proof}
		In fact, for all $t>0$, $x\in M$, there are three situations:
		\begin{itemize}
			\item [\bf(1)] If $T^{\varepsilon}_{t}\varphi(x)= T^{-}_{t}\varphi(x)$, the result has been proven;
			\item [\bf(2)] If $T^{\varepsilon}_{t}\varphi(x)> T^{-}_{t}\varphi(x)$, we need to prove
			\[T^{\varepsilon}_{t}\varphi(x)-T^{-}_{t}\varphi(x)\leq \frac{\varepsilon}{\lambda}(e^{\lambda t}-1);\]
			\item [\bf(3)] If $T^{\varepsilon}_{t}\varphi(x)< T^{-}_{t}\varphi(x)$, we need to prove
			\[T^{-}_{t}\varphi(x)-T^{\varepsilon}_{t}\varphi(x)\leq \frac{\varepsilon}{\lambda}(e^{\lambda t}-1).\]
		\end{itemize}
		We only give a proof of the second situation, since the third one can be shown in a symmetric way.
		
		If $T^{\varepsilon}_{t}\varphi(x)> T^{-}_{t}\varphi(x)$, let $\gamma:[0,t]\to M$ be a minimizer of $T^{-}_{t}\varphi(x)$ satisfying $\gamma(t)=x$. Let $F(s):=T^{\varepsilon}_s\varphi(\gamma(s))-T^-_s\varphi(\gamma(s))$, $s\in(0,t]$. From part (2) of  Proposition \ref{prop01} , we know $F(s)$ is continuous in $(0,t]$.  $F(t)=T^{\varepsilon}_t\varphi(x)-T^-_t\varphi(x)>0$ and $$F(0):=\lim_{s\rightarrow 0}[T^{\varepsilon}_s\varphi(\gamma(s))-T^-_s\varphi(\gamma(s))]=\varphi(\gamma(0))-\varphi(\gamma(0))=0.$$
		Hence, there exists $s_{0}\in [0,t)$ such that $F(s_{0})=T^{\varepsilon}_{s_0}\varphi(\gamma(s_0))-T^-_{s_0}\varphi(\gamma(s_0))=0$ and $F(s)>0$ for $ s\in(s_{0},t]$. Since $\gamma$ is a minimizer of $T^-_t\varphi(x)$, for any $s\in(s_0,t]$, we have
		\begin{equation*}
			T^-_s\varphi(\gamma(s))=T^-_{s-s_0}\circ T^-_{s_0}\varphi(\gamma(s))=T^-_{s_0}\varphi(\gamma(s_0))+\int_{s_0}^sL\big(\gamma(\tau),\dot{\gamma}(\tau),T^-_\tau\varphi(\gamma(\tau))\big)d\tau,
		\end{equation*}
		and
		\begin{equation*}
			T^{\varepsilon}_s\varphi(\gamma(s))=T^{\varepsilon}_{s-s_0}\circ T^{\varepsilon}_{s_0}\varphi(\gamma(s))\leq T^{\varepsilon}_{s_0}\varphi(\gamma(s_0))+\int_{s_0}^sL^{\varepsilon}\big(\gamma(\tau),\dot{\gamma}(\tau),T^{\varepsilon}_\tau\varphi(\gamma(\tau))\big)d\tau.
		\end{equation*}
		Thus we get
		$$F(s)\leq\int^s_{s_0}\Big|L^{\varepsilon}\big(\gamma(\tau),\dot{\gamma}(\tau),T^{\varepsilon}_\tau\varphi(\gamma(\tau))\big)-L\big(\gamma(\tau),\dot{\gamma}(\tau),T^-_\tau\varphi(\gamma(\tau))\big)\Big|d\tau.$$
		Using Proposition \ref{prop1} and \textbf{(L3)}, we obtain that
		\begin{equation*}
			\begin{split}
				F(s)
				\leq&\int^s_{s_0}\Big|L^{\varepsilon}\big(\gamma(\tau),\dot{\gamma}(\tau),T^{\varepsilon}_\tau\varphi(\gamma(\tau))\big)-L\big(\gamma(\tau),\dot{\gamma}(\tau),T^{\varepsilon}_\tau\varphi(\gamma(\tau))\big)\Big|d\tau\\
				+&\int^s_{s_0}\Big|L\big(\gamma(\tau),\dot{\gamma}(\tau),T^{\varepsilon}_\tau\varphi(\gamma(\tau))\big)-L\big(\gamma(\tau),\dot{\gamma}(\tau),T^-_\tau\varphi(\gamma(\tau))\big)\Big|d\tau\\
				\leq&\int_{s_{0}}^s \varepsilon+\int_{s_{0}}^s \Big|\frac{\partial L}{\partial u}(\gamma(\tau),\dot{\gamma}(\tau),\phi(\tau))\Big|\cdot\Big(T^{\varepsilon}_\tau\varphi(\gamma(\tau))-T^{-}_\tau\varphi(\gamma(\tau))\Big)d\tau,\\
				\leq&\varepsilon(s-s_{0})+\int_{s_{0}}^s\lambda F(\tau)d\tau, \quad \forall s\in(s_0,t].
			\end{split}
		\end{equation*}
		Next by Gronwall inequality, we have
		\[F(t)\leq \frac{\varepsilon}{\lambda}(e^{\lambda (t-s_{0})}-1)\leq\frac{\varepsilon}{\lambda}(e^{\lambda t}-1), \]
		that is
		$$T^{\varepsilon}_{t}\varphi(x)-T^{-}_{t}\varphi(x)\leq \frac{\varepsilon}{\lambda}(e^{\lambda t}-1),\quad \forall t>0,\ \forall x\in M.$$
		
		Now the proof is complete.
	\end{proof}

	\section{Proof of Theorem \ref{thm1}}

	\begin{proof}[ Proof of Theorem \ref{thm1}]
		Since the viscosity solution $u_-$ is locally asymptotic stable, there exists $\delta_0>0$ such that
		\begin{equation*}
			\lim_{t\to+\infty}\|T^{-}_{t}\varphi-u_{-}\|_{\infty}=0,
		\end{equation*}
		for any $\varphi\in C(M,R)$ satisfying $\|\varphi-u_{-}\|_{\infty}<\delta_0$.
		
		For all $\delta>0$, let $u_{\delta}=u_{-}-\min\{\delta,\delta_0\}$ and $u^{\delta}=u_{-}+\min\{\delta,\delta_0\}$ respectively. We note that $u_{-}$ is Lipschitz continuous on $M$, and  $u_{\delta},\ u^{\delta} $ are bounded. Thus we can find $t_{\delta}>0$ such that for all $t\geq t_{\delta}$,
		\begin{equation*}
			\|T^{-}_{t}u^{\delta}-u_{-}\|_{\infty}\leq\frac{1}{2}\min\{\delta,\delta_0\},
		\end{equation*}
		and
		\begin{equation*}
			\|T^{-}_{t}u_{\delta}-u_{-}\|_{\infty}\leq\frac{1}{2}\min\{\delta,\delta_0\}.
		\end{equation*}
		By Proposition \ref{prop2}, for any $\varepsilon>0$, we have $$\|T^{\varepsilon}_{t_{\delta}}u^{\delta}-T^{-}_{t_{\delta}}u^{\delta}\|_{\infty}\leq \frac{\varepsilon}{\lambda}(e^{\lambda t_{\delta}}-1),$$
		and
		\begin{equation*}\label{eq3.1}
			\|T^{\varepsilon}_{t_{\delta}}u_{\delta}-T^{-}_{t_{\delta}}u_{\delta}\|_{\infty}\leq \frac{\varepsilon}{\lambda}(e^{\lambda t_{\delta}}-1).
		\end{equation*}
		Then for any $\varepsilon\in[0,\frac{\lambda\min\{\delta,\delta_0\}}{2(e^{\lambda t_{\delta}}-1)}]$, we get
		$$\|T^{\varepsilon}_{t_{\delta}}u^{\delta}-u_{-}\|_{\infty}\leq\min\{\delta,\delta_0\} \quad \text{and} \quad \|T^{\varepsilon}_{t_{\delta}}u_{\delta}-u_{-}\|_{\infty}\leq\min\{\delta,\delta_0\}.$$
		So using  part (1) of Proposition \ref{prop01}, we have
		$$u^{\delta}=u_{-}+\min\{\delta,\delta_0\}\geq T^{\varepsilon}_{t_{\delta}}u^{\delta}\geq T^{\varepsilon}_{t_{\delta}}u_{\delta}\geq u_{-}-\min\{\delta,\delta_0\}=u_{\delta}.$$
		Therefore, using part (1) of Proposition  \ref{prop01}, we obtain for $n\in N$,
		\[u_{\delta}\leqslant T^{\varepsilon}_{t_{\delta}}u_{\delta}\leqslant T^{\varepsilon}_{2t_{\delta}}u_{\delta}\leqslant T^{\varepsilon}_{3t_{\delta}}u_{\delta}\leqslant \dots \leqslant T^{\varepsilon}_{nt_{\delta}}u_{\delta},\]
		and
		\[u^{\delta}\leqslant T^{\varepsilon}_{t_{\delta}}u^{\delta}\leqslant T^{\varepsilon}_{2t_{\delta}}u^{\delta}\leqslant T^{\varepsilon}_{3t_{\delta}}u^{\delta}\leqslant \dots \leqslant T^{\varepsilon}_{nt_{\delta}}u^{\delta}.\]
		Then we get 
		\begin{equation}\label{eq3.2}
			u^{\delta}\geq T^{\varepsilon}_{nt_{\delta}}u^{\delta}\geq T^{\varepsilon}_{nt_{\delta}}u_{\delta}\geq u_{\delta}, \quad \forall n\in \mathbf{N}.
		\end{equation}
		For any $t\in \R^{+}$, suppose $t=nt_{\delta}+s, n\in \mathbf{N}, s\in [0,t_{\delta})$. Then we can deduce that
		\begin{equation}\label{eq3.3}
			T^{\varepsilon}_su^{\delta}\geq T^{\varepsilon}_{t}u^{\delta}\geq T^{\varepsilon}_{t}u_-\geq T^{\varepsilon}_{t}u_{\delta}\geq T^{\varepsilon}_su_{\delta}, \quad \forall s\in[0,t_{\delta}).
		\end{equation}
		We note that $s<t_{\delta}$, $u_{\delta},\ u^{\delta}$ are bounded. The functions $(x,s)\to T^{\varepsilon}_su^{\delta}(x)$ and $(x,s)\to T^{\varepsilon}_su_{\delta}(x)$ are continuous on $M\times[0,t_{\delta}]$, $T^{\varepsilon}_su^{\delta}(x)$ and $T^{\varepsilon}_su_{\delta}(x)$  are bounded on $M\times[0,t_{\delta}]$, there is constant $C>0$ such that
		$$\big|T^{\varepsilon}_tu_-(x)\big|\leq C, \quad \forall (x,t)\in M\times[0,+\infty).$$		
		Denote by $l_1>0$ a Lipschitz constant of the function $(x_0,u_0,x)\to h^{\varepsilon}_{x_0,u_0}(x,1)$ on $M\times[-C,C]\times M$. Recall (4) of Proposition \ref{prop01}, for all $x,y\in M$, we have
		\begin{equation*}
			\begin{split}
				&\big|T^{\varepsilon}_{t}u_-(x)-T^{\varepsilon}_{t}u_-(y)\big|\\
				=&\big|\inf_{z\in M}h^{\varepsilon}_{z,T^{\varepsilon}_{t-1}u_-(z)}(x,1)-\inf_{z\in M}h^{\varepsilon}_{z,T^{\varepsilon}_{t-1}u_-(z)}(y,1)\big|\\
				\leq&\sup_{z\in M}\big|h^{\varepsilon}_{z,T^{\varepsilon}_{t-1}u_-(z)}(x,1)-h^{\varepsilon}_{z,T^{\varepsilon}_{t-1}u_-(z)}(y,1)\big|\\
				\leq&l_{1}d(x,y), \quad \forall t>1.		 		
			\end{split}
		\end{equation*}
		So $\{T^{\varepsilon}_tu_-(x)\}_{t>1}$ is equi-Lipschitz on $M$. Let $u^{\varepsilon}_{-}(x):=\liminf_{t\to +\infty}T^{\varepsilon}_{t}u_-(x)$. It is easy to prove that
		$$u^{\varepsilon}_-(x)=\lim_{t\to+\infty}\inf_{s\geqslant t}T^{\varepsilon}_su_-(x) \text{ uniformly on } x\in M.$$
		Note that
		$$u^{\varepsilon}_{-}=\lim_{\sigma\to +\infty}\inf_{s\geqslant \sigma}T^{\varepsilon}_{t}\circ T^{\varepsilon}_{s}u_-=\lim_{\sigma\to +\infty}T^{\varepsilon}_{t}(\inf_{s\geqslant \sigma}T^{\varepsilon}_{s}u_-)=T^{\varepsilon}_{t}(\lim_{\sigma\to +\infty}\inf_{s\geqslant \sigma}T^{\varepsilon}_{s}u_-)=T^{\varepsilon}_{t}u^{\varepsilon}_{-},\quad \forall t>0.$$
		It shows that $u^{\varepsilon}_-$ is a fixed point of $\{T^{\varepsilon}_t\}_{t\geqslant0}$. By Proposition \ref{prop9}, $u^{\varepsilon}_-$ is a viscosity solution of \eqref{eqth1}.
		
		On the other hand, from Proposition \ref{prop9} and Proposition \ref{prop2}, when $\varepsilon\leq \frac{\lambda\min\{\delta,\delta_0\}}{2(e^{\lambda t_{\delta}}-1)}$, we have
		\begin{equation}\label{eq3.4}
			T^{\varepsilon}_{s}u_-\geq T^{-}_{s}u_--\frac{\varepsilon}{\lambda}(e^{\lambda s}-1)\geq u_--\frac{1}{2}\min\{\delta,\delta_0\}>u_{\delta}, \quad \forall s\in[0,t_{\delta}].
		\end{equation}
	
		For any $t\in \R^{+}$, suppose $t=nt_{\delta}+s, n\in \mathbf{N}, s\in [0,t_{\delta})$. Then by \eqref{eq3.2} and \eqref{eq3.4}, we can deduce that
		\begin{equation}\label{eq3.5}
			T^{\varepsilon}_tu_-\geq T^{\varepsilon}_{nt_{\delta}}u_{\delta}\geq u_{\delta}.
		\end{equation}
		Thus according to formulas \eqref{eq3.2}, \eqref{eq3.3} and \eqref{eq3.5}, we get
		$$u_-+\delta\geq u^{\delta}\geq\liminf_{n\to+\infty}T^{\varepsilon}_{nt_{\delta}}u^{\delta}\geq \liminf_{t\to+\infty}T^{\varepsilon}_{t}u^{\delta}\geq \liminf_{t\to+\infty}T^{\varepsilon}_{t}u_-\geq u_{\delta}\geq u_--\delta,$$
		which means $$\|u^{\varepsilon}_--u_-\|\leq\delta, \quad \forall \varepsilon\leq\varepsilon_{\delta}:=\frac{\lambda\min\{\delta,\delta_0\}}{2(e^{\lambda t_{\delta}}-1)}.$$
	\end{proof}

	\section{Proof of Theorem \ref{thm2}}

	Recall the following result.
	
	\begin{lemma}(\cite[Lemma 2.3]{Z24})\label{lemma4.1}
		Let $u_-\in\mathcal{S}^-$. If for all $(x,p,u)\in\Lambda_{u_-}$, $\frac{\partial H}{\partial u}(x,p,u)>0$,  then there exists $\delta_0>0$, such that for any $\varphi\in C(M,\R)$ satisfying $\|\varphi-u_-\|_{\infty}<\delta_0$, the following holds
		$$\lim_{t\to+\infty}T^-_t\varphi=u_-.$$
	\end{lemma}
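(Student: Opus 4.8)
The plan is to turn the sign condition into a one‑sided rigidity statement for $u_-$ among weak KAM solutions, obtain the convergence first for the two extreme data $u_-\pm c$ by a monotonicity argument, and then squeeze an arbitrary $\varphi$ between these two using order preservation of $\{T^-_t\}_{t\ge0}$. First I fix the geometry: since $u_-$ is Lipschitz, $\Lambda_{u_-}$ is compact, so $\partial H/\partial u>0$ there upgrades (continuity $+$ compactness) to $\lambda_0>0$ and an open neighbourhood $\mathcal N\supset\Lambda_{u_-}$ in $T^*M\times\R$ with $\partial H/\partial u\ge\lambda_0$ on $\mathcal N$, equivalently $\partial L/\partial u\le-\lambda_0$ on the Legendre‑dual region $\mathcal N'\subset TM\times\R$. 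Because the speeds of $(u_-,L,0)$‑calibrated curves are bounded and their lifts lie in $\Lambda_{u_-}$ (Proposition~\ref{inva}), I then fix $\delta_0>0$ so small that $\partial L/\partial u(x,\dot x,w)\le-\lambda_0/2$ whenever $(x,\dot x)=(\gamma(s),\dot\gamma(s))$ for some $(u_-,L,0)$‑calibrated $\gamma$ and $|w-u_-(x)|\le\delta_0$.

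\textbf{One-sided rigidity.} The core claim is: if $v\in\mathcal S^-$, $u_-\le v$ and $\|v-u_-\|_\infty\le\delta_0$, then $v\equiv u_-$. Let $x^\ast$ maximise $v-u_-$ and, for $t>0$, let $\sigma:[0,t]\to M$ with $\sigma(t)=x^\ast$ be the reparametrised $(u_-,L,0)$‑calibrated curve ending at $x^\ast$ (Definition~\ref{bwkam}(2)). Calibration gives $u_-(x^\ast)=u_-(\sigma(0))+\int_0^tL(\sigma,\dot\sigma,u_-(\sigma))\,d\tau$ and domination $v\prec L$ gives $v(x^\ast)\le v(\sigma(0))+\int_0^tL(\sigma,\dot\sigma,v(\sigma))\,d\tau$. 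Subtracting, and using that along $\sigma$ one has $L(\sigma,\dot\sigma,v(\sigma))-L(\sigma,\dot\sigma,u_-(\sigma))\le-\tfrac{\lambda_0}{2}(v(\sigma)-u_-(\sigma))\le0$ a.e. (by the choice of $\delta_0$ and $0\le v-u_-\le\delta_0$), together with $v(\sigma(0))-u_-(\sigma(0))\le\|v-u_-\|_\infty$, I get
\[
\|v-u_-\|_\infty\ \le\ \|v-u_-\|_\infty-\frac{\lambda_0}{2}\int_0^t(v-u_-)(\sigma(\tau))\,d\tau .
\]
Hence $(v-u_-)\circ\sigma$, being nonnegative with vanishing integral and continuous, is $\equiv0$ on $[0,t]$; taking $\tau=t$ gives $v(x^\ast)=u_-(x^\ast)$, so $v\equiv u_-$. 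The symmetric statement — if $v\in\mathcal S^-$, $v\le u_-$, $\|v-u_-\|_\infty\le\delta_0$ \emph{and $\Lambda_v\subset\mathcal N$}, then $v\equiv u_-$ — follows by the same computation with a $v$‑calibrated curve (calibration for $v$, domination for $u_-$).

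\textbf{Passing to the limit.} Fix $0\le c\le\delta_0$. Using a $u_-$‑calibrated curve ending at $x$ as a competitor in the infimum defining $T^-_t(u_-+c)(x)$ and the calibration identity for $u_-$, the same estimate (run as a continuation argument in $t$ so the intermediate values stay within $\delta_0$ of $u_-$) yields $T^-_t(u_-+c)\le u_-+c$ for all $t\ge0$; with $T^-_t(u_-+c)\ge T^-_tu_-=u_-$ (Proposition~\ref{prop9} and order preservation) and the semigroup law, $t\mapsto T^-_t(u_-+c)$ is nonincreasing, hence decreases to some $u^+\ge u_-$ with $\|u^+-u_-\|_\infty\le c$. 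As $\{T^-_t(u_-+c)\}_{t\ge1}$ is equi‑Lipschitz (the argument in the proof of Theorem~\ref{thm1}), the convergence is uniform, $u^+$ is a fixed point of $\{T^-_t\}$, i.e. $u^+\in\mathcal S^-$, and one‑sided rigidity forces $u^+=u_-$. Symmetrically $T^-_t(u_--c)$ increases to some $u^-\in\mathcal S^-$ with $u^-\le u_-$, $\|u^--u_-\|_\infty\le c$; checking $\Lambda_{u^-}\subset\mathcal N$ — a semicontinuity fact: weak KAM solutions $C^0$‑close to $u_-$ are equi‑Lipschitz, their calibrated curves subconverge in $C^1$ to $(u_-,L,0)$‑calibrated curves by Proposition~\ref{inva} and Proposition~\ref{prop2.7}, so their lifts accumulate only on $\Lambda_{u_-}$ — gives $u^-=u_-$. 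Finally, for $\varphi$ with $\|\varphi-u_-\|_\infty<\delta_0$ put $c:=\|\varphi-u_-\|_\infty$; order preservation gives $T^-_t(u_--c)\le T^-_t\varphi\le T^-_t(u_-+c)$ for all $t$, and letting $t\to+\infty$ squeezes $T^-_t\varphi\to u_-$.

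\textbf{The main obstacle.} The ``from above'' half (rigidity for $v\ge u_-$ and the monotone convergence $T^-_t(u_-+c)\downarrow u_-$) is essentially self‑contained once $\mathcal N$ is fixed. The genuine difficulty is the ``from below'' half: showing $T^-_t(u_--c)$ is monotone — so that its limit is a bona fide weak KAM solution — and that $\Lambda_{u^-}\subset\mathcal N$, both of which require controlling the lifts of long minimizing curves. I expect this to rest on a Mañé‑type a priori compactness statement (minimizers of long duration spend almost all their time near the Aubry/Mather set, whose backward weak KAM lift sits inside $\Lambda_{u_-}$) together with the semicontinuity of $u\mapsto\Lambda_u$ along $C^0$‑convergent weak KAM solutions; this is the step that really uses the contact structure rather than the bare semigroup formalism.
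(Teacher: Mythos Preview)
The paper does not give its own proof of this lemma: it is quoted verbatim from \cite[Lemma 2.3]{Z24} and used as a black box. So there is no in-paper argument to compare yours against; what follows is an assessment of your sketch on its own merits.

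Your ``from above'' half is essentially correct. Compactness of $\Lambda_{u_-}$ gives the uniform lower bound $\lambda_0$, and the calibration/domination subtraction along a $(u_-,L,0)$-calibrated curve, together with the continuation-in-$t$ argument, does yield $u_-\le T^-_t(u_-+c)\le u_-+c$ for all $t$, hence monotone convergence to a fixed point $u^+\ge u_-$ with $\|u^+-u_-\|_\infty\le c$; your rigidity step then forces $u^+=u_-$.

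The ``from below'' half, however, is not symmetric, and the word ``symmetrically'' hides a real gap. To bound $u_-(x)-T^-_t(u_--c)(x)$ you must use a \emph{minimizer} $\eta$ of $T^-_t(u_--c)(x)$, not a $u_-$-calibrated curve, and you then need $\partial L/\partial u\le -\lambda_0/2$ along $(\eta(\tau),\dot\eta(\tau),\,\cdot\,)$. Nothing in your setup controls $(\eta,\dot\eta)$: a priori you only have the crude estimate $\|T^-_t(u_--c)-u_-\|_\infty\le c\,e^{\lambda t}$ from Proposition~\ref{prop01}(3), so $T^-_t(u_--c)$ could in principle drift to $-\infty$ and there is no limit $u^-$ to speak of. Your proposed fix --- semicontinuity of $u\mapsto\Lambda_u$ --- addresses the rigidity step \emph{after} a limiting weak KAM solution $u^-$ has been produced, but not the prior step of producing $u^-$. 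A cleaner route (and likely closer to what \cite{Z24} does) is to observe directly that, by the sign hypothesis, $u_--c$ is a viscosity \emph{subsolution} of \eqref{0.2} for $c\le\delta_0$: at each differentiability point $H(x,D_xu_-(x),u_-(x)-c)<H(x,D_xu_-(x),u_-(x))=0$. In the contact setting this is equivalent to $u_--c\prec L$, and one can then run a Gronwall/continuation argument along the minimizer $\eta$ (using that domination controls the Lipschitz constant of $u_--c$, hence the speed of $\eta$, hence places $(\eta,\dot\eta)$ in the good region) to obtain $T^-_t(u_--c)\ge u_--c$ for all $t$. This closes the gap, but it is work you have not done.
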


	In the light of Lemma \ref{lemma4.1}, we know that $u_-\in\mathcal{S}^-$ will be locally asymptotically stable if  for all $(x,p,u)\in\Lambda_{u_-}$, $u_-$ satisfies $\frac{\partial H}{\partial u}(x,p,u)>0$.  To finish the proof of Theorem \ref{thm2}, we still need the following result.

	\begin{lemma}\label{prop5}
		Let $u_-\in\mathcal{S}^-$. If $\frac{\partial H}{\partial u}(x,p,u)>0$, $\forall(x,p,u)\in\Lambda_{u_-}$, then there is constant $\varepsilon_0$ such that for any $\varepsilon\in[0,\varepsilon_0]$, the equation \eqref{eqth1} has a viscosity solution $u^{\varepsilon}_-$ satisfying
		$$\frac{\partial H^{\varepsilon}}{\partial u}(x,p,u)>0, \quad \forall (x,p,u)\in\Lambda_{u^{\varepsilon}_-}.$$
	\end{lemma}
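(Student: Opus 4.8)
The plan is to upgrade the hypothesis to a quantitative lower bound on a fixed neighborhood of $\Lambda_{u_-}$, produce the solution $u^{\varepsilon}_-$ via Theorem \ref{thm1}, and then show that $\Lambda_{u^{\varepsilon}_-}$ is contained in that neighborhood once $\varepsilon$ is small. \emph{Step 1 (uniform lower bound near $\Lambda_{u_-}$).} Since $u_-$ is Lipschitz (Remark \ref{ll}) and bounded, the set defining $\Lambda_{u_-}$ is bounded in $T^*M\times\R$, so $\Lambda_{u_-}$ is compact. As $\frac{\partial H}{\partial u}$ is continuous and strictly positive on $\Lambda_{u_-}$, there are $m>0$ and $\rho\in(0,1]$ with $\frac{\partial H}{\partial u}(x,p,u)\ge 2m$ whenever $d((x,p,u),\Lambda_{u_-})\le\rho$. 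Because $P\in C^3_0$, $K:=\sup|\frac{\partial P}{\partial u}|<\infty$, so $\frac{\partial H^{\varepsilon}}{\partial u}=\frac{\partial H}{\partial u}+\varepsilon\frac{\partial P}{\partial u}\ge 2m-\varepsilon K\ge m$ on the same $\rho$-neighborhood as soon as $\varepsilon\le m/K$. \emph{Step 2 (existence and closeness of $u^{\varepsilon}_-$).} The hypothesis here is exactly that of Lemma \ref{lemma4.1}, so $u_-$ is locally Lyapunov asymptotically stable and Theorem \ref{thm1} applies: for every $\delta>0$ and $\varepsilon\le\varepsilon_\delta$ the function $u^{\varepsilon}_-:=\liminf_{t\to+\infty}T^{\varepsilon}_t u_-$ is a viscosity solution of \eqref{eqth1} with $\|u^{\varepsilon}_--u_-\|_\infty\le\delta$; in particular $u^{\varepsilon}_-\to u_-$ uniformly as $\varepsilon\to0^+$, the family is uniformly bounded, and since $u^{\varepsilon}_-\prec L^{\varepsilon}$ with $L^{\varepsilon}\to L$ uniformly (Proposition \ref{prop1}), it is equi-Lipschitz on $M$ for small $\varepsilon$.

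\emph{Step 3 ($\Lambda_{u^{\varepsilon}_-}$ approaches $\Lambda_{u_-}$).} The claim is that for every $\eta>0$ there is $\varepsilon_1>0$ with $\Lambda_{u^{\varepsilon}_-}\subset\{z:d(z,\Lambda_{u_-})<\eta\}$ for all $\varepsilon\le\varepsilon_1$. If this fails, choose $\varepsilon_n\to0$ and $z_n\in\Lambda_{u^{\varepsilon_n}_-}$ with $d(z_n,\Lambda_{u_-})\ge\eta$. Since Propositions \ref{prop4}--\ref{prop2.7} hold for $(H^{\varepsilon_n},L^{\varepsilon_n})$ once $\varepsilon_n\le\theta$, Proposition \ref{prop2.7} (applied to a $(u^{\varepsilon_n}_-,L^{\varepsilon_n},0)$-calibrated curve defined on $[-2,0]$, so the contact time stays in a fixed compact subinterval of $(-2,0)$) produces calibrated curves $\g_n$ and times $t_n$ with $d(z_n,z_n(t_n))<1/n$, where $z_n(s):=(\g_n(s),D_x u^{\varepsilon_n}_-(\g_n(s)),u^{\varepsilon_n}_-(\g_n(s)))$. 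By Proposition \ref{inva} each $z_n$ solves the contact flow of $H^{\varepsilon_n}$; superlinearity of $H^{\varepsilon_n}$ together with $H^{\varepsilon_n}(z_n(s))=0$ bounds the momenta, hence the velocities $\dot\g_n=\frac{\partial H^{\varepsilon_n}}{\partial p}(z_n)$, uniformly in $n$. Thus the $z_n$ lie in a fixed compact set and are equi-$C^1$; since $H^{\varepsilon_n}\to H$ in $C^2$, a subsequence converges in $C^1$ to a curve $z_*=(\g_*,p_*,v_*)$ solving the contact flow of $H$, with $v_*=u_-\circ\g_*$ and $t_n\to t_*$ interior. Passing to the limit in the calibration identity (using $L^{\varepsilon_n}\to L$ uniformly from Proposition \ref{prop1} and $u^{\varepsilon_n}_-\to u_-$ uniformly) shows $\g_*$ is $(u_-,L,0)$-calibrated, so by Propositions \ref{diffe} and \ref{inva}, $z_*(s)=(\g_*(s),D_xu_-(\g_*(s)),u_-(\g_*(s)))\in\Lambda_{u_-}$ on the interior. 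Evaluating at $s=t_*$ gives $\lim z_n=\lim z_n(t_n)=z_*(t_*)\in\Lambda_{u_-}$, contradicting $d(z_n,\Lambda_{u_-})\ge\eta$.

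\emph{Step 4 (conclusion).} Apply Step 3 with $\eta=\rho$ to obtain $\varepsilon_1$, and set $\varepsilon_0:=\min\{\varepsilon_1,\,m/K,\,\theta,\,\varepsilon_{\delta_0}\}$. For $\varepsilon\in[0,\varepsilon_0]$ the solution $u^{\varepsilon}_-$ of \eqref{eqth1} exists and $\Lambda_{u^{\varepsilon}_-}$ lies in the $\rho$-neighborhood of $\Lambda_{u_-}$, on which $\frac{\partial H^{\varepsilon}}{\partial u}\ge m>0$ by Step 1; hence $\frac{\partial H^{\varepsilon}}{\partial u}(x,p,u)>0$ for all $(x,p,u)\in\Lambda_{u^{\varepsilon}_-}$, as desired. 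I expect the main obstacle to be Step 3: one needs genuine $C^1$-convergence of the calibrated curves—equivalently convergence of the momenta $D_xu^{\varepsilon_n}_-$ along them—so that accumulation points of $\Lambda_{u^{\varepsilon_n}_-}$ land in $\Lambda_{u_-}$. This rests on the uniform equi-Lipschitz bound for $\{u^{\varepsilon}_-\}$, on a priori compactness of calibrated curves via superlinearity, and on continuous dependence of the contact flow under $C^2$-perturbations of the Hamiltonian, together with passing to the limit in the action functional.
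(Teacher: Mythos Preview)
Your proof is correct and follows essentially the same strategy as the paper: invoke Lemma \ref{lemma4.1} and Theorem \ref{thm1} to produce $u^{\varepsilon}_-$ close to $u_-$, show by a compactness/contradiction argument on calibrated curves that $\Lambda_{u^{\varepsilon}_-}$ lies in any prescribed neighborhood of $\Lambda_{u_-}$ for small $\varepsilon$, and then conclude from the uniform positivity of $\partial H/\partial u$ there together with $|\partial H^{\varepsilon}/\partial u-\partial H/\partial u|\le\varepsilon\,|\partial P/\partial u|$. The only cosmetic differences are that the paper obtains the a priori velocity bound via an explicit Lipschitz estimate (its Step 1) rather than your superlinearity-plus-energy argument, and it passes to the limit in the momenta using $\partial L^{\varepsilon}/\partial\dot x\to\partial L/\partial\dot x$ from Proposition \ref{prop1} rather than $C^2$-convergence of the Hamiltonian flow.
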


	\begin{proof}
		From Lemma \ref{lemma4.1} and Theorem \ref{thm1}, we can find that for any given  $\delta\in(0,1)$, and for all $\varepsilon\in[0,\varepsilon_{\delta}]$, the equation \eqref{eqth1} has at least one viscosity solution $u^{\varepsilon}_{-}$ satisfying
		\begin{equation}\label{eq4.4}
			\|u^{\varepsilon}_{-}-u_{-}\|\leqslant\delta<1,
		\end{equation}
		where $\varepsilon_{\delta}$ is a positive constant depending on $\delta$.
		
		Since the rest of proof is quite long, we divide it into three steps.
		
		\item [\bf{Step 1:}] We assert that for any $\varepsilon\in[0,\min\{1,\varepsilon_{\delta}\}]$ and any $(u^{\varepsilon}_-,L^{\varepsilon},0)$-calibrated curve $\gamma_{\varepsilon}:[a,b]\to M$, there is some constant $C$ depending only on $u_-$ and the Lagrangian $L$, such that
		$$\|\dot{\gamma}_{\varepsilon}(s)\|\leqslant C, \quad \forall s\in(a,b).$$

		In fact,  as a viscosity solution of \eqref{eqth1}, $u^{\varepsilon}_-$ is dominated by $L^{\varepsilon}$. Thus by Proposition \ref{prop1}, for all $\varepsilon\in[0,\min\{1,\varepsilon_{\delta}\}]$ and each continuous piecewise $C^1$ curve $\gamma:[t_1,t_2]\rightarrow M$, we have
		\begin{equation}\label{eq4.1}
			\begin{split}
				u^{\varepsilon}_-(\gamma(t_2))-u^{\varepsilon}_-(\gamma(t_1))&\leqslant\int_{t_1}^{t_2}L^{\varepsilon}(\gamma(s),\dot{\gamma}(s),u^{\varepsilon}_-(\gamma(s)))ds\\
				&\leqslant\int_{t_1}^{t_2}\Big(L(\gamma(s),\dot{\gamma}(s),u^{\varepsilon}_-(\gamma(s)))+1\Big)ds,
			\end{split}
		\end{equation}
		 Note that $u^{\varepsilon}_-\in[\min_{x\in M}u_--1,\max_{x\in M}u_-+1]$, we denote
		$$A:=\sup\Big\{L(x,v,u)+1 \big|\ (x,v,u)\in TM\times\R,\ \|v\|_x=1,\ u\in[\min_{x\in M}u_--1,\max_{x\in M}u_-+1]\Big\}.$$
		Then from \eqref{eq4.1}, it is easy to find that
		$$|u^{\varepsilon}_-(x)-u^{\varepsilon}_-(y)|\leqslant A\cdot d(x,y), \quad \forall x,\ y\in M,\ \forall \varepsilon\in[0,\min\{1,\varepsilon_{\delta}\}].$$
		So $u^{\varepsilon}_-$ is Lipschitz. Take a $(u^{\varepsilon}_-,L^{\varepsilon},0)$-calibrated curve $\gamma_{\varepsilon}:[a,b]\to M$. By the definition of calibrated curve, 	then	for all $a\leqslant t_1<t_2\leqslant b$, we have
		\begin{equation}\label{eq4.2}
			A\cdot d(\gamma_{\varepsilon}(t_1),\gamma_{\varepsilon}(t_2))\geqslant u^{\varepsilon}_-(\gamma_{\varepsilon}(t_2))-u^{\varepsilon}_-(\gamma_{\varepsilon}(t_1))=\int_{t_1}^{t_2}L^{\varepsilon}(\gamma_{\varepsilon}(s),\dot{\gamma}_{\varepsilon}(s),u^{\varepsilon}_-(\gamma_{\varepsilon}(s)))ds,
		\end{equation}
 Using Proposition \ref{prop1}, \textbf{(L2)-(L3)}  and \eqref{eq4.4}, for some constant $C_{u_-,A}$ depending on $A$ and $u_-$, we get
		\begin{equation}\label{eq4.3}
			\begin{split}
				&L^{\varepsilon}(\gamma_{\varepsilon}(s),\dot{\gamma}_{\varepsilon}(s),u^{\varepsilon}_-(\gamma_{\varepsilon}(s)))\\
				\geqslant& L(\gamma_{\varepsilon}(s),\dot{\gamma}_{\varepsilon}(s),u^{\varepsilon}_-(\gamma_{\varepsilon}(s)))-1\\
				\geqslant& L(\gamma_{\varepsilon}(s),\dot{\gamma}_{\varepsilon}(s),u_-(\gamma_{\varepsilon}(s)))-\lambda\cdot(u^{\varepsilon}_-(\gamma_{\varepsilon}(s))-u_-(\gamma_{\varepsilon}(s)))-1\\
				\geqslant& (A+1)\|\dot{\gamma}_{\varepsilon}(s)\|-C_{u_-,A}-\lambda-1, \quad \forall s\in[a,b],
			\end{split}
		\end{equation}
		 According to \eqref{eq4.2} and \eqref{eq4.3}, we deduce that
		$$d(\gamma_{\varepsilon}(t_1),\gamma_{\varepsilon}(t_2))\leqslant(C_{u_-,A}+\lambda+1)\cdot(t_2-t_1), \quad for\ any\ a\leqslant t_1<t_2\leqslant b.$$
		From Proposition \ref{diffe}, we know $\gamma_{\varepsilon}$ is differentiable in $(a,b)$. Therefore,
		$$\|\dot{\gamma}_{\varepsilon}(s)\|\leqslant C_{u_-,A}+\lambda+1, \quad \forall s\in(a,b).$$
		Thus the assertion holds true.
		
		\item [\bf{Step 2:}] We will prove the following assertion.
		
		For any $\delta>0$, there is a constant $\tilde{\varepsilon}_{\delta}>0$ such that for all $\varepsilon\in[0,\tilde{\varepsilon}_{\delta}]$, we have
		$$dist((x,p,u),\Lambda_{u_-})\leqslant\delta, \quad \forall (x,p,u)\in\Lambda_{u^{\varepsilon}_-}$$
		where $dist(\alpha,S):=\inf_{\beta\in S}d(\alpha,\beta)$ for all $\alpha\in T^*M\times\R$ and $S\subseteq T^*M\times\R$.
		
		Assume by contradiction that there are a constant $c_0>0$, a sequence $\{\varepsilon_n\}$ with $\varepsilon_n\to0$ as $n\to+\infty$ and a sequence $(x_n,p_{n},u_n)\in\Lambda_{u^{\varepsilon_n}_-}$ such that
		\begin{equation}\label{eq4.6}
			dist((x_n,p_n,u_n),\Lambda_{u_-})\geqslant c_0, \quad \forall n\in\mathbf{N}^+.
		\end{equation}
		By Proposition \ref{prop2.7}, we can find $(u^{\varepsilon_n}_-,L,0)$-calibrated curve $\gamma_n:[-1,0]\to M$ and $t_n\in(-1,0)$ for each $x_n$ such that
		\begin{equation}\label{eq4.5}
			d((x_n,p_n,u_n),(\g_n(t_n),\tilde{p}_n(t_n),\tilde{u}_n(t_n)))<\frac{c_0}{3}, \quad \forall n\in\mathbf{N}^+,
		\end{equation}
		where
		$$\tilde{p}_n(s):=D_xu^{\varepsilon_n}_-(\g_n(s)), \quad \tilde{u}_n(s):=u^{\varepsilon_n}_-(\g_n(s)), \quad \forall s\in[-1,0].$$
		Based on Step 1, $\{\dot{\g}_n\}$ and $\{\tilde{p}_n\}$ are uniformly bounded. From Proposition \ref{inva}, we know $(\g_n(s),\tilde{p}_n(s),\tilde{u}_n(s))\ (s\in[-1,0])$ is the flow generated by system \begin{align*}
			\left\{
			\begin{array}{l}
				\dot{x}=\frac{\partial H^{\varepsilon_n}}{\partial p}(x,p,u),\\[1mm]
				\dot{p}=-\frac{\partial H^{\varepsilon_n}}{\partial x}(x,p,u)-\frac{\partial H^{\varepsilon_n}}{\partial u}(x,p,u)p,\qquad (x,p,u)\in T^*M\times\mathbf{R},\\[1mm]
				\dot{u}=\frac{\partial H^{\varepsilon_n}}{\partial p}(x,p,u)\cdot p-H^{\varepsilon_n}(x,p,u).
			\end{array}
			\right.
		\end{align*}
		So
		\[
		\dot{\gamma}_n(s)=\frac{\partial H^{\varepsilon_n}}{\partial p}\big(\gamma_n(s),\tilde{p}_n(s),\tilde{u}_n(s)\big),
		\]
		and
		\begin{equation*}
			\begin{split}
				&\ddot{\gamma}_n=\frac{\partial^2H^{\varepsilon_n}}{\partial p\partial x}\big(\gamma_n,\tilde{p}_n,\tilde{u}_n\big)\cdot\dot{\gamma}_n+\frac{\partial^2H^{\varepsilon_n}}{\partial p\partial u}\big(\gamma_n,\tilde{p}_n,\tilde{u}_n\big)\cdot\dot{\tilde{u}}_n+\frac{\partial^2H^{\varepsilon_n}}{\partial p^2}\big(\gamma_n,\tilde{p}_n,\tilde{u}_n\big)\cdot\dot{\tilde{p}}_n\\
				=&\Big\{\frac{\partial^2H^{\varepsilon_n}}{\partial p\partial x}\big(\gamma_n,\tilde{p}_n,\tilde{u}_n\big)\cdot\frac{\partial H^{\varepsilon_n}}{\partial p}\big(\gamma_n,\tilde{p}_n,\tilde{u}_n\big)+\frac{\partial^2H^{\varepsilon_n}}{\partial p\partial u}\big(\gamma_n,\tilde{p}_n,\tilde{u}_n\big)\cdot\big(\frac{\partial H^{\varepsilon_n} }{\partial p}\big(\gamma_n,\tilde{p}_n,\tilde{u}_n\big)\cdot \tilde{p}_n\\
				-&H^{\varepsilon_n}\big(\gamma_n,\tilde{p}_n,\tilde{u}_n\big)\big)+\frac{\partial^2H^{\varepsilon_n}}{\partial p^2}\big(\gamma_n,\tilde{p}_n,\tilde{u}_n\big)\cdot\big(-\frac{\partial H^{\varepsilon_n}}{\partial x}\big(\gamma_n,\tilde{p}_n,\tilde{u}_n\big)-\frac{\partial H^{\varepsilon_n}}{\partial u}\big(\gamma_n,\tilde{p}_n,\tilde{u}_n\big)\cdot \tilde{p}_n\big)\Big\}.
			\end{split}
		\end{equation*}
		Therefore, $\{\ddot{\g}_n\}$ are also uniformly bounded. By Arzel$\grave{a}$-Ascoli Theorem and if necessary passing to a subsequence, we may assume that $\g_n\to \g$ and $\dot{\g}_n\to \dot{\g}$ uniformly on $[-1,0]$ for some curve $\gamma$. Since $u^{\varepsilon_n}_-\to u_-$ uniformly on $M$ as $n\to+\infty$ and $\gamma_n$ satisfies
		$$u_{-}^{\varepsilon_n}(\gamma_n(b))-u_{-}^{\varepsilon_n}(\gamma_n(a))=\int^{b}_{a}L^{\varepsilon_n}(\gamma_n(s),\dot{\gamma}_n(s),u^{\varepsilon_n}_-(\gamma_n(s)))ds, \quad -1\leqslant a<b\leqslant0,$$
		we can deduce that
		$$u_{-}(\gamma(b))-u_{-}(\gamma(a))=\int^{b}_{a}L(\gamma(s),\dot{\gamma}(s),u_-(\gamma(s)))ds, \quad -1\leqslant a<b\leqslant0.$$
		That means $\g$ is a $(u_-,L,0)$-calibrated curve. From Proposition \ref{inva},
		$$\tilde{p}_n(s)=D_xu^{\varepsilon_n}_-(\g_n(s))=\frac{\partial L^{\varepsilon_n}}{\partial \dot{x}}(\g_n(s),\dot{\g}_n(s),u^{\varepsilon_n}_-(\g_n(s))), \quad \forall s\in(-1,0),\ \forall n\in\mathbf{N}^+.$$
		Note that
		$$\frac{\partial L^{\varepsilon_n}}{\partial\dot{x}}\to\frac{\partial L}{\partial\dot{x}} \text{ uniformly on } TM\times\R \text{ and } u^{\varepsilon_n}_-\to u_- \text{ uniformly on } M, \text{ as } n\to+\infty.$$
		Then we have $\tilde{p}_n(s)\to\frac{\partial L}{\partial\dot{x}}(\g(s),\dot{\g}(s),u_-(\g(s)))=D_xu_-(\g(s))$ uniformly on $(-1,0)$ as $n\to+\infty$. So
		$$(\g_n(s),\tilde{p}_n(s),\tilde{u}_n(s))\to(\g(s),D_xu_-(\g(s)),u_-(\g(s)))\in\Lambda_{u_-}, \quad \forall s\in(-1,0).$$
		Because $\Lambda_{u_-}$ is closed, there is a constant $N>0$ such that
		$$dist((\g_n(t_n),\tilde{p}_n(t_n),\tilde{u}_n(t_n)),\Lambda_{u_-})<\frac{c_0}{3}, \quad \forall n>N.$$
		Recall \eqref{eq4.5}, we have
		\begin{equation*}
			\begin{split}
				&dist((x_n,p_n,u_n),\Lambda_{u_-})\\
				\leqslant& d((x_n,p_n,u_n),(\g_n(t_n),\tilde{p}_n(t_n),\tilde{u}_n(t_n)))+dist((\g_n(t_n),\tilde{p}_n(t_n),\tilde{u}_n(t_n)),\Lambda_{u_-})\\
				\leqslant&\frac{c_0}{3}+\frac{c_0}{3}<c_0.
			\end{split}
		\end{equation*}
		which contradicts \eqref{eq4.6}. Therefore, the assertion holds true.
		
		\item [\bf{Step 3:}] Since $\frac{\partial H}{\partial u}(x,p,u)>0,\ \forall(x,p,u)\in\Lambda_{u_-}$, from Step 2, we can find constant $\varepsilon_1>0$ such that for any $\varepsilon\in[0,\varepsilon_1]$,
		$$\frac{\partial H}{\partial u}(x,p,u)>0,\ \forall(x,p,u)\in\Lambda_{u^{\varepsilon}_-}.$$
		On the other hand, we get
		$$\Big|\frac{\partial H}{\partial u}(x,p,u)-\frac{\partial H^{\varepsilon}}{\partial u}(x,p,u)\Big|\leqslant\varepsilon\cdot\Big|\frac{\partial P}{\partial u}(x,p,u)\Big|, \quad \forall(x,p,u)\in T^*M\times\R.$$
		So there is $\varepsilon_0>0$ such that for any $\varepsilon\in[0,\varepsilon_0]$,
		$$\frac{\partial H^{\varepsilon}}{\partial u}(x,p,u)>0,\ \forall(x,p,u)\in\Lambda_{u^{\varepsilon}_-}.$$
		
		Now the proof is complete.
	\end{proof}

	\begin{proof}[ Proof of Theorem \ref{thm2}]
		From Lemma \ref{lemma4.1} and Theorem \ref{thm1}, we know the result of Theorem \ref{thm1} holds. And from Lemma \ref{prop5}, we know that there exists $\delta_0'>0$, such that for any $\varphi\in C(M,\R)$ satisfying $\|\varphi-u^{\varepsilon}_-\|_{\infty}<\delta_0'$, the following holds
		$$\lim_{t\to+\infty}T^-_t\varphi=u^{\varepsilon}_-.$$
		That means $u^{\varepsilon}_-$ is locally asymptotically stable and is the unique viscosity solution of equation \eqref{eqth1} satisfying $$\|u^{\varepsilon}_--u_-\|\leqslant\delta_0, \quad \text{for some }\delta_0>0.$$
		Combining the result of Theorem \ref{thm1} and Lemma \ref{lemma4.1}, we get the result of Theorem \ref{thm2}.
	\end{proof}

\textbf{Acknowledgements}  We would like to thank the referees for their valuable comments,which have significantly
improved the presentation of the results in this paper.\\

\textbf{Data availability} Data sharing not applicable to this article as no datasets were generated or analyzed in this study.\\

\textbf{Declarations}

\textbf{Conflict of interest} No conflict of interest exits in the submission of this manuscript, and manuscript is
approved by all authors for publication.

	\bibliographystyle{plain}

\end{document}